\renewcommand{\leq}{\ensuremath{\leqslant}}
\renewcommand{\geq}{\ensuremath{\geqslant}}
\newcommand{\eps}{\varepsilon}
\newcommand{\such}{\ \big|\ }
\newcommand{\N}{\mathbb{N}}
\newcommand{\R}{\mathbb{R}}
\renewcommand{\C}{\mathcal{C}}
\newcommand{\set}[1]{
	\left\{#1\right\}
}
\newcommand{\ps}[1]{
	\left(#1\right)
}
\newcommand{\sbr}[1]{
	\left[#1\right]
}
\newcommand{\lsi}[1]{
	\left[#1\right)
}
\newcommand{\rsi}[1]{
	\left(#1\right]
}
\newcommand{\md}[1]{
	\left|#1\right|
}
\DeclareMathOperator{\sign}{sign}
\theoremstyle{plain}
\newtheorem{theorem}{Теорема}
\newtheorem{lemma}{Лемма}
\newtheorem{corollary}{Следствие}
\newtheorem{proposition}{Утверждение}
\title{Нигде не монотонная интегрируемая по Риману производная, не имеющая локальных экстремумов}
\author{Н.А. Гусев, А.Е. Комагоров}
\begin{document}
	
	\maketitle
	
\begin{abstract}
	Построена нигде не монотонная интегрируемая по Риману производная, не имеющая локальных экстремумов.
	Также построена дифференцируемая функция $G$ с интегрируемой по Риману производной, имеющей изолированный локальный максимум в точке, не являющейся точкой перегиба функции $G$.
	
	We construct a nowhere monotone Riemann integrable derivative which has no local extrema. 
	We also construct a differentiable function $G$ such that $G'$ is Riemann integrable and has an isolated maximum which is not an inflection point of $G$.
\end{abstract}

В англоязычной Википедии \cite{Wiki} сформулировано следующее утверждение:
\begin{proposition}\label{fake-prop}
	Точка $x \in \R$ является точкой перегиба дифференцируемой функции~$f$ тогда и только тогда, когда $x$ является изолированным локальным экстремумом её производной.
\end{proposition}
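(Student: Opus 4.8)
The plan is to treat the proposition as a biconditional and attack the two implications separately, fixing first the working definition of an inflection point of a differentiable $f$: a point $x_0$ admitting a one-sided neighbourhood on which $f$ is convex and an opposite one-sided neighbourhood on which $f$ is concave (equivalently, $f'$ is monotone of one sense to the left of $x_0$ and of the opposite sense to the right). The only structural fact about $f'$ that I would rely on is that a derivative is a Darboux function: it has the intermediate value property and in particular no jump discontinuities, so that any one-sided limit of $f'$ at $x_0$, should it exist, is forced to equal the finite value $f'(x_0)$.

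For the forward implication (inflection $\Rightarrow$ isolated local extremum of $f'$) I would argue as follows. Suppose $f$ is convex on $\ps{x_0 - \delta, x_0}$ and concave on $\ps{x_0, x_0 + \delta}$. Then $f'$ is non-decreasing on the left interval and non-increasing on the right, so each one-sided limit of $f'$ at $x_0$ exists and, by the absence of jump discontinuities, equals $f'(x_0)$. Hence $f'(x) \le f'(x_0)$ throughout the punctured neighbourhood, and a short argument strengthening convexity to strict convexity (or simply excluding the degenerate case in which $f'$ is locally constant next to $x_0$) promotes this to $f'(x) < f'(x_0)$ for $0 < \md{x - x_0} < \delta$, i.e. $x_0$ is an isolated local maximum of $f'$; the mirror configuration gives an isolated minimum. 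I expect this direction to go through essentially as stated, the only care being the isolation issue just noted.

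The reverse implication (isolated local extremum of $f' \Rightarrow$ inflection) is where I expect the statement to break, and this is the main obstacle. The hypothesis ``$f'(x) < f'(x_0)$ for all $x \ne x_0$ near $x_0$'' is purely pointwise: it compares $f'(x_0)$ with neighbouring values but says nothing about the monotonicity of $f'$ on either side, whereas an inflection point demands that $f'$ be monotone (of opposite senses) on the two sides — a far stronger, interval-wise requirement. Since a derivative may be nowhere monotone, exactly the phenomenon realized by the function built in the first part of this paper, one can keep $f'$ oscillating on both sides of $x_0$ while still holding it strictly below the peak value $f'(x_0)$, so that $f$ is neither convex nor concave on any one-sided neighbourhood and $x_0$ is not an inflection point. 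I therefore anticipate that the correct treatment of the proposition is not a proof but a refutation: the reverse implication fails, the decisive witness being the function $G$ of the abstract, whose Riemann integrable derivative has an isolated maximum at a point that is not an inflection point of $G$.
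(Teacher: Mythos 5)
Your reading of the proposition is exactly the paper's: the authors likewise establish only the necessity direction (strict one-sided convexity/concavity of $f$ is equivalent to strict monotonicity of $f'$, which together with the Darboux property of derivatives forces $f'(x_0)$ to be the strict, isolated extremal value) and devote Theorem~\ref{inflection} to refuting sufficiency with precisely the function $G$ you name. The only difference is one of detail: you spell out the no-jump-discontinuity step showing that the one-sided limits of $f'$ must equal $f'(x_0)$, which the paper leaves implicit.
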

Нетрудно проверить, что данное утверждение справедливо для любой непрерывно дифференцируемой функции на интервале. 
Однако не всякая дифференцируемая функция является непрерывно дифференцируемой, что приводит к вопросу о справедливости утверждения~\ref{fake-prop} в общем случае.
Необходимость указанного условия для дифференцируемых функций сохраняется, так как строгая выпуклость (вогнутость) такой функции $f$ в односторонних окрестностях точки перегиба равносильна строгому возрастанию (убыванию) её производной, что влечёт отсутствие экстремумов у $f'$ в соответствующих окрестностях.
В~настоящей работе мы строим пример нигде не монотонной\footnotemark{} производной, имеющей изолированный экстремум. 
Данный пример показывает, что достаточность указанного в утверждении \ref{fake-prop} условия может нарушаться для дифференцируемых функций, не являющихся непрерывно дифференцируемыми.

\footnotetext{функция называется \emph{нигде не монотонной}, если она не является монотонной ни на одном интервале, лежащем в её области определения.}

В основе нашего построения лежит пример нигде не монотонной производной, не имеющей локальных экстремумов.
Известны примеры производных, имеющих плотное множество нулей, и принимающих значения разных знаков в любом интервале, см. \cite{Weil76} или \cite[Ch. 2, Theorem 6.6]{Bruckner}; простые явные примеры таких функций можно найти также в \cite[Corollary 4.2]{Kalyabin13}.
Ясно, что такие производные нигде не монотонны, но отсутствие у них локальных экстремумов не очевидно. В \cite[Ch. 6, Theorem 3.1]{Bruckner} была доказана следующая теорема:
\begin{theorem}
	Существует ограниченная аппроксимативно непрерывная производная, которая не имеет локальных экстремумов.
\end{theorem}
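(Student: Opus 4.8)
Достаточно построить ограниченную аппроксимативно непрерывную функцию $f\colon\R\to\R$, не имеющую локальных экстремумов. В самом деле, аппроксимативно непрерывная функция измерима, и по теореме Лебега о дифференцировании интеграла для ограниченной измеримой $f$ её индефинитный интеграл $F(x)=\int_0^x f(t)\,dt$ удовлетворяет равенству $F'(x_0)=f(x_0)$ в каждой точке $x_0$ аппроксимативной непрерывности $f$; поскольку аппроксимативно непрерывная функция такова всюду, получаем $F'=f$, то есть $f$ — производная. Отметим, что разрывность (в обычном смысле) здесь неизбежна: непрерывная функция без локальных экстремумов монотонна, так как при наличии тройки $a<b<c$ с $f(b)>\max\{f(a),f(c)\}$ (или $f(b)<\min\{f(a),f(c)\}$) функция достигала бы максимума (соответственно минимума) на $[a,c]$ во внутренней точке, а немонотонность как раз доставляет такую тройку. Именно аппроксимативная непрерывность позволяет совместить отсутствие экстремумов с разрывностью.

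Удобно обеспечить более сильное свойство: на каждом интервале $I$ выполнено $\sup_I f=s$ и $\inf_I f=-s$ для некоторой константы $s>0$, причём эти значения не достигаются. Тогда все значения $f$ лежат строго между $-s$ и $s$, а в любой окрестности любой точки $x_0$ найдутся точки, где $f$ превосходит $f(x_0)$ (значения, близкие к $s$) и где $f$ меньше $f(x_0)$ (значения, близкие к $-s$); следовательно, $f$ не имеет локальных экстремумов (и нигде не монотонна).

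Основной строительный блок — аппроксимативно непрерывный «всплеск». Для точки $p$ возьмём стягивающуюся к $p$ последовательность непересекающихся интервалов, объединение которых имеет плотность нуль в точке $p$, и разместим на них непрерывные горбы; положив значение в $p$ равным нулю, получим ограниченную функцию, аппроксимативно непрерывную всюду (в том числе в $p$, где обычная непрерывность нарушена), принимающую в сколь угодно малой окрестности $p$ значения заданного знака и величины. Искомую $f$ строим как равномерно сходящийся ряд $f=\sum_k g_k$ таких всплесков, расставленных по всюду плотному семейству точек и масштабов так, чтобы в каждом интервале значения $f$ приближались как к $s$, так и к $-s$, не достигая этих чисел. Равномерная сходимость гарантирует ограниченность и аппроксимативную непрерывность предела: аппроксимативная непрерывность равносильна непрерывности в топологии плотности, а равномерный предел непрерывных (в любой топологии) отображений непрерывен.

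Главная трудность — согласовать два противоположных требования: (a) аппроксимативную непрерывность всюду, вынуждающую «крупную» часть каждого всплеска лежать на множествах нулевой плотности, и (b) незатухающую осцилляцию около каждого значения в каждой окрестности, которую эти разреженные всплески обязаны обеспечить, не давая при этом супремуму и инфимуму быть достигнутыми. Ключ — в порядке выбора параметров: геометрию (длины и положения интервалов-носителей) на каждом шаге фиксируем уже после амплитуд предыдущих шагов, а последующие амплитуды берём настолько малыми, чтобы суммарный вклад «хвоста» ряда в любой рассматриваемой окрестности был меньше уже созданного там всплеска. Это обеспечивает строгие неравенства $f(y)>f(x_0)$ и $f(z)<f(x_0)$ для подходящих $y,z$, сколь угодно близких к $x_0$, и одновременно сохраняет недостижимость значений $\pm s$.
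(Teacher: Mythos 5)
Прежде всего: в самой статье эта теорема не доказывается --- она приводится со ссылкой на книгу Брукнера, причём авторы специально подчёркивают, что известная конструкция \emph{весьма нетривиальна} и опирается на глубокую теорему Максимова. Сопоставлять ваш текст, таким образом, не с чем, кроме этого предупреждения, и оно точно указывает, где в вашем наброске пробел. Корректные части плана: сведение к построению ограниченной аппроксимативно непрерывной функции без локальных экстремумов (точка аппроксимативной непрерывности ограниченной измеримой функции есть её точка Лебега, поэтому $F'=f$ всюду, и $f$ --- производная); сохранение аппроксимативной непрерывности при равномерной сходимости (непрерывность в топологии плотности); замечание о том, что непрерывная функция без локальных экстремумов монотонна. Всё это стандартно и верно, но это лишь рамка.

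Содержательное ядро теоремы --- сама конструкция --- не предъявлено. Ваш последний абзац лишь называет главную трудность (носители всплесков обязаны иметь плотность нуль в точках накопления, и при этом в каждой окрестности каждой точки $x_0$ ряд должен принимать значения строго больше и строго меньше $f(x_0)$, не достигая $\pm s$) и предлагает эвристику: хвост ряда меньше уже созданного всплеска. Эта эвристика не доведена до проверяемого утверждения. Чтобы из неравенства $S_N(y)>S_N(x_0)+2\sup|T_N|$ (где $S_N$ --- частичная сумма, $T_N$ --- хвост) вывести $f(y)>f(x_0)$ для \emph{всех} $x_0$ и \emph{всех} окрестностей, нужно, чтобы осцилляция, создаваемая $N$-м шагом внутри произвольно малого интервала, превосходила удвоенную сумму всех последующих амплитуд, --- и это при том, что сама эта осцилляция сосредоточена на разреженных (плотности нуль) множествах, а точка $x_0$ может оказаться как раз точкой накопления центров всплесков, где частичные суммы ведут себя наиболее капризно. Именно согласование этих требований и составляет доказательство: без явного задания расположений, длин и амплитуд и без разбора всех типов точек утверждение остаётся недоказанным. Показательно, что в конструкции теоремы \ref{main_prop} из настоящей статьи авторы обходят аналогичную трудность не оценкой хвоста, а точным самоподобием (находятся точки $x_1,x_2$, в которых все последующие слагаемые \emph{в точности} совпадают со своими значениями в $x_0$), но платой за это является потеря аппроксимативной непрерывности: множества, где $f_k$ близка к $\pm 1$, имеют положительную верхнюю плотность в точках разрыва. Ваш набросок не показывает, как получить и то и другое одновременно.
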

Приведённая в \cite{Bruckner} конструкция соответствующей производной весьма нетривиальна и в ней используется глубокая теорема Максимова \cite{Maximoff1940,Maximoff1943}.
Кроме того, не ясно, будет ли построенная в \cite{Bruckner} производная интегрируемой по Риману.
Например, в \cite{Stronska89} построены примеры ограниченных аппроксимативно непрерывных функций, множество точек разрыва которых имеет полную меру.

Наши основные результаты состоят в следующем:

\begin{theorem} \label{main_prop}
	Существует интегрируемая по Риману функция $f: [-1, 1] \rightarrow \R$, имеющая первообразную и удовлетворяющая одновременно двум условиям:
	\begin{enumerate}
		\item $f$ не имеет локальных экстремумов на $[-1, 1]$,
		\item $f$ не является монотонной ни на каком интервале $(a, b) \subset [-1, 1]$.
	\end{enumerate}
\end{theorem}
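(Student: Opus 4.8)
\medskip
\noindent\textbf{План доказательства.}
Я бы строил $f$ как равномерно сходящийся ряд из «атомов». Зафиксируем модельную функцию $g\colon\R\to\R$, являющуюся ограниченной производной, равную нулю вне $[-1,1]$, непрерывную всюду, кроме точки~$0$, и такую, что $g(0)=0$, а вблизи нуля она колеблется: $\limsup_{x\to0}g(x)=1$ и $\liminf_{x\to0}g(x)=-1$. В качестве $g$ можно взять подходящим образом срезанную производную функции $x\mapsto x^2\sin(1/x)$, равную $2x\sin(1/x)-\cos(1/x)$ вблизи нуля. Занумеруем плотное множество центров $\set{q_n}\subset(-1,1)$ (например, рациональные точки), выберем масштабы $\delta_n>0$ и амплитуды $c_n>0$ и положим
\[
f_n(x)=c_n\,g\!\ps{\frac{x-q_n}{\delta_n}},\qquad f=\sum_{n=1}^{\infty}f_n.
\]

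Сначала я бы разобрался с регулярностью. Каждая $f_n$ — ограниченная производная с первообразной $F_n(x)=c_n\delta_n\,G\!\ps{\frac{x-q_n}{\delta_n}}$, где $G'=g$. Если потребовать $\sum_n c_n<\infty$ и $\delta_n\le1$, то ряд $\sum_n f_n$ сходится равномерно, поэтому $f$ ограничена и является производной (равномерный предел производных есть производная) с первообразной $F=\sum_n F_n$; тем самым у $f$ есть первообразная. Так как каждая $f_n$ непрерывна вне точки $q_n$, равномерно сходящийся ряд непрерывен вне счётного множества $\set{q_n}$, значит множество точек разрыва $f$ имеет меру нуль, а ограниченная и почти всюду непрерывная функция интегрируема по Риману. Таким образом, интегрируемость и наличие первообразной обеспечены почти автоматически, и остаётся проверить два содержательных свойства.

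Ключевое наблюдение таково: вблизи каждого центра $q_m$ «остаток» $g_m:=\sum_{k\ne m}f_k$ непрерывен в точке $q_m$ (как равномерный предел функций, непрерывных в $q_m$), причём $g_m(q_m)=f(q_m)$, тогда как $f_m$ колеблется с амплитудой $c_m$; поэтому
\[
\limsup_{x\to q_m}f(x)=f(q_m)+c_m,\qquad \liminf_{x\to q_m}f(x)=f(q_m)-c_m.
\]
Отсюда в любом интервале $I$, содержащем некоторый центр $q_m$, функция $f$ в сколь угодно малой окрестности точки $q_m$ принимает значения как больше $f(q_m)$, так и меньше $f(q_m)$, поэтому $f$ не монотонна на $I$ — это доказывает свойство~(2), коль скоро центры плотны. Для свойства~(1) нужно усилить сказанное до утверждения: для всякой точки $x_0$ и всякого $\eps>0$ найдутся $x',x''\in\ps{x_0-\eps,\,x_0+\eps}$ с $f(x')>f(x_0)>f(x'')$. Для $x_0=q_m$ это вытекает из выписанных равенств; трудность — в произвольной точке $x_0$, где $f(x_0)$ может заметно отличаться от $f(q_m)$ для близких центров.

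Это и есть главное препятствие. Я бы выбирал масштабы $\delta_n$ и амплитуды $c_n$ иерархически так, чтобы в каждом интервале $\ps{x_0-\eps,\,x_0+\eps}$ нашёлся центр $q_m$ из него, для которого $c_m>\md{f(q_m)-f(x_0)}$; тогда выписанные равенства дают сколь угодно близкие к $q_m$ (а значит, лежащие в этом интервале) точки со значениями как выше, так и ниже $f(x_0)$. Величина $\md{f(q_m)-f(x_0)}$ контролируется атомами сравнимого или большего масштаба, пересекающими интервал и липшицевыми на нём, тогда как более мелкие атомы дают лишь малое суммарное колебание; подбирая $c_n$ против $\delta_n$ так, чтобы «грубая» часть имела малое колебание на носителе каждого более мелкого атома, можно добиться нужного доминирования одновременно на всех масштабах. Проверка этого баланса — особенно в точках $x_0$, являющихся пределами центров, где взаимодействует бесконечно много масштабов, — и составляет основную техническую трудность; построение приходится вести индукцией по масштабам, удерживая суммарный вклад всех атомов строго мельче и строго крупнее данного ниже заранее заданной доли его амплитуды. Как только эта оценка установлена, свойства~(1) и~(2) получаются немедленно, что завершает доказательство.
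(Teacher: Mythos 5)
There is a genuine gap, and it sits exactly at the heart of the theorem. Your treatment of integrability, of the existence of a primitive, and of nowhere-monotonicity (property 2) is sound: those parts do follow almost automatically from uniform convergence and the countability of $\{q_n\}$. But for property (1) you reduce everything to the claim that for every $x_0$ and every $\eps>0$ there is a centre $q_m\in(x_0-\eps,x_0+\eps)$ with $c_m>\md{f(q_m)-f(x_0)}$, and you do not prove it --- you only assert that a hierarchical choice of $\delta_n$, $c_n$ can achieve it. This claim faces concrete obstructions that your sketch does not address. First, the self-interaction term: $f(x_0)-f(q_m)$ contains the summand $f_m(x_0)=c_m\,g\ps{(x_0-q_m)/\delta_m}$, which by itself can be as large as $c_m$ in absolute value when $x_0$ lies near a peak of the $m$-th atom, so the desired strict inequality can fail before any other atom is even considered; you would have to choose $q_m$ so that $x_0$ sits in a ``neutral'' zone of atom $m$, and nothing in the construction guarantees such a centre exists in every interval. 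Second, the ``coarse atoms are Lipschitz'' control breaks down whenever the singular centre $q_n$ of a coarse atom lies in or near $[x_0,q_m]$: there $f_n$ oscillates with amplitude $c_n$ much larger than $c_m$ over that interval. Third, since the centres are dense, \emph{every} point $x_0$ is an accumulation point of centres, so infinitely many scales interact at $x_0$ and $f(x_0)$ is not controlled by finitely many Lipschitz pieces plus a small tail in any obvious way. It is telling that the classical ``sum of bumps'' constructions of nowhere monotone derivatives (Weil, Pompeiu-type examples) typically \emph{do} have local extrema --- e.g.\ a dense set of local minima --- and the paper's introduction notes that the known derivative without local extrema (Bruckner, Ch.~6, Thm.~3.1) requires Maximoff's theorem. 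The step you defer is not a routine balancing of parameters; it is the entire difficulty.

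For contrast, the paper avoids quantitative estimates altogether by an iterated self-similar construction $f_k=f_1\circ f_{k-1}$, $f=\sum_k f_k/2^k$, whose structural lemma gives, around any non-exceptional point $x_0$ and inside any prescribed neighbourhood, an interval $A_{k,i}\ni x_0$ on which the partial sum $S_k$ is linear and strictly monotone, together with points $x_1<x_0<x_2$ in $A_{k,i}$ at which $f_{k+1}(x_j)=f_{k+1}(x_0)$ and hence $f_l(x_j)=f_l(x_0)$ for \emph{all} $l>k$. The tail of the series then cancels exactly in $f(x_j)-f(x_0)$, and strict monotonicity of $S_k$ forces one of $f(x_1),f(x_2)$ above and the other below $f(x_0)$. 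To salvage your approach you would need to build a comparable exact (or uniformly dominated) cancellation mechanism into the choice of atoms; as written, the proposal does not prove the theorem.
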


\begin{theorem} \label{inflection}
	Существует дифференцируемая функция $G$
	такая, что её производная $g$ имеет изолированный локальный экстремум, не являющийся точкой перегиба функции $G$. При этом $g$ интегрируема по Риману.
\end{theorem}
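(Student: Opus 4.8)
The plan is to construct $G$ so that its derivative $g = G'$ has an isolated local maximum at the origin, building $g$ out of the function $f \colon [-1,1] \to \R$ supplied by Theorem~\ref{main_prop}. First I would reformulate the goal. For $0$ to fail to be an inflection point of $G$ it is enough that $G$ be neither convex nor concave on any interval $(0,\delta)$; indeed, as recalled after Proposition~\ref{fake-prop}, at an inflection point $G$ would be strictly convex on one side and strictly concave on the other, hence $g$ would be monotone on a one-sided neighbourhood. Since $G$ is convex (concave) on $(0,\delta)$ exactly when $g$ is non-decreasing (non-increasing) there, it suffices to produce a Riemann integrable derivative $g=G'$ that has an isolated local maximum at $0$ and is \emph{not monotone on any interval} $(0,\delta)$.

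Two properties of $f$ drive the construction: it is bounded (being Riemann integrable), and it cannot be continuous on any interval, since a continuous nowhere monotone function necessarily has local extrema while $f$ has none. In particular $f$ is not locally monotone anywhere, so I may fix two points $\xi_-<\xi_+$ in $(-1,1)$ such that $\xi_-$ is not a right local minimum and $\xi_+$ is not a left local maximum of $f$ (if every point of an interval were a right local minimum, $f$ would be non-decreasing there). I would then set $g(x)=-x^2$ on $[-1,0)$ and $g(0)=0$, and on $(0,1]$ interleave \emph{blocks} and \emph{connectors} along a sequence $1=t_0>s_0>t_1>s_1>\dots\to0$: on each block $B_n=\sbr{s_n,t_n}$ put the affine copy $g(x)=\eps\,4^{-n}f\ps{\ell_n(x)}-2^{-n}$, where $\ell_n\colon B_n\to[\xi_-,\xi_+]$ is the increasing affine bijection and $\eps>0$ is small, and on each connector $C_n=\sbr{t_{n+1},s_n}$ let $g$ be the linear function joining the two neighbouring block values. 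For small $\eps$ every block value is negative and the block suprema tend to $0$, so $g<0=g(0)$ on $(0,1]$ with $g(x)\to0$ as $x\to0^+$.

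The connectors are what make the scheme work. Because each block is a genuine affine copy of $f$, it inherits the absence of local extrema, and because the connector levels $-2^{-n}$ decrease as $x$ increases, each connector is strictly decreasing and hence has no interior extrema either. At a junction $s_n$ the block $B_n$ lies to the right with left endpoint mapped to $\xi_-$, so $f$ takes values below $f(\xi_-)$ arbitrarily close on the right (as $\xi_-$ is not a right local minimum), giving $g$-values below $g(s_n)$, while the strictly decreasing connector on the left gives $g$-values above $g(s_n)$; thus $s_n$ is not a local extremum, and symmetrically at each $t_n$ using that $\xi_+$ is not a left local maximum. Consequently $g$ has no local extremum in $(0,1]$, so its maximum at $0$ is isolated; it is not monotone on any $(0,\delta)$ because every such interval contains a whole block on which $g$ is a non-constant copy of the nowhere monotone $f$; and it is Riemann integrable since it is bounded and its discontinuity set — a countable union of measure-zero copies of that of $f$, together with the junctions and $0$ — is null.

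The hard part will be to verify that $g$ is actually a derivative, i.e.\ that $G(x)=\int_0^x g$ is differentiable on all of $[-1,1]$ with $G'=g$, precisely at the junctions and at $0$, the very points where $g$ keeps oscillating with non-vanishing amplitude. This looks paradoxical, since differentiability of $G$ seems to call for continuity of $g$. The resolution — and the crux of the proof — is the familiar phenomenon, as for $x^2\sin(1/x)$ at $0$, that a derivative may oscillate at a point while its antiderivative stays differentiable there. Concretely, near a junction the difference quotient of $G$ on the block side equals, up to the affine scaling, $\dfrac{F(\ell_n(x))-F(\xi_\pm)}{\ell_n(x)-\xi_\pm}$, which tends to $F'(\xi_\pm)=f(\xi_\pm)$ because $F$ — not $f$ — is differentiable at $\xi_\pm$; matching this with the (continuous) connector side shows $G'$ exists and equals $g$ at the junction, and at $0$ one uses only that $\sup_{B_n}|g|\to0$ as the blocks shrink to $0$. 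This is exactly what lets the blocks carry real oscillation, keeping $g$ non-monotone and extremum-free away from $0$, while $g$ remains an honest Riemann integrable derivative.
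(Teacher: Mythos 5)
Your construction is correct in outline, but it takes a genuinely different route from the paper. The paper does not build a new function at all: it simply sets $g(x)=f(x)\cdot\sign(x)$ and $G(x)=F(x)\cdot\sign(x)$, and then proves --- using the explicit structure of the approximants $f_k$ (the middle segments $\sbr{-\frac{1}{2^k},\frac{1}{2^k}}$ on which $f_k$ increases linearly, Utverzhdeniya \ref{middle_intervals}--\ref{local_min}) --- the quantitative bound $f(x)>0$ on $\ps{0,\frac14}$, so that $0$ is a strict one-sided minimum of $f$ and hence a strict isolated two-sided extremum of $g$; nowhere-monotonicity of $g$ then rules out an inflection point exactly as in your first paragraph. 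Your splicing of rescaled copies of $f$ into a staircase descending to $0$ buys generality: it works for \emph{any} derivative with the two properties of Theorem \ref{main_prop}, without inspecting how $f$ was built. The price is a longer verification, the essential part of which --- that $G(x)=\int_0^x g$ is differentiable at the junctions because the one-sided difference quotients reduce to difference quotients of $F$ at $\xi_\pm$, and at $0$ because $\sup_{(0,x]}|g|\to 0$ --- you have sketched correctly.

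One step needs repair. You justify the existence of $\xi_-$ (not a right local minimum) and $\xi_+$ (not a left local maximum) by the claim that if every point of an interval were a right local minimum then $f$ would be non-decreasing there. For an arbitrary function this is false (a decreasing step function is a counterexample), so as written the argument has a gap. It is rescued by the fact that $f$ is a derivative and therefore has the Darboux property: if $f(y)<f(x)$ for some $x<y$ while every point of $[x,y]$ is a right local minimum, let $c$ be the supremum of those $t$ with $f\ge f(x)$ on $[x,t]$; either $f(c)\ge f(x)$, which contradicts maximality of $c$ via the right-minimum property at $c$, or $f(c)<f(x)$, which contradicts the intermediate value property of $f$ on $[x,c]$. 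Alternatively, for the specific $f$ of the paper you could take $\xi_\pm$ from $\bigcup_k E_k$, since the proof of Utverzhdenie \ref{f_discontinuities} produces points with larger and smaller values in \emph{each} one-sided neighbourhood of such points. Either fix should be stated explicitly; with it, your proof goes through.
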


\section{Обозначения}

Всюду в тексте под окрестностью точки $x$ подразумевается интервал $(x - \eps, x + \eps)$ для некоторого $\eps > 0$, пересечённый с областью определения рассматриваемой функции. Односторонняя окрестность --- полуинтервал $(x - \eps, x]$ или $[x, x + \eps)$.

Через $\mathcal{R}[a, b]$ будем обозначать множество функций, интегрируемых по Риману на отрезке $[a,b]$, а через $\mathcal{C}[a, b]$ --- непрерывных на $[a, b]$. 

\section{Вспомогательные результаты}

Приведём ряд общеизвестных теорем (см. например \cite{Redkozubov}).

\begin{theorem} (Критерий Лебега)
	Функция $f: [a, b] \to \R$ интегрируема по Риману на $[a, b]$ тогда и только тогда, когда она ограничена на $[a, b]$ и множество точек разрыва $f$ имеет меру нуль по Лебегу.
\end{theorem}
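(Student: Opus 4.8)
The plan is to route both implications through the \emph{oscillation} of $f$. For $x \in [a,b]$ and $\delta > 0$ put $\omega(f,x,\delta) = \sup\{|f(y) - f(z)| : y, z \in (x-\delta, x+\delta) \cap [a,b]\}$ and define the oscillation at $x$ by $\omega_f(x) = \inf_{\delta > 0} \omega(f,x,\delta)$; since $\omega(f,x,\delta)$ is nonnegative and nondecreasing in $\delta$, this infimum equals $\lim_{\delta \to 0^+} \omega(f,x,\delta)$. The first routine observation is that $f$ is continuous at $x$ if and only if $\omega_f(x) = 0$, so the discontinuity set is $D = \{x : \omega_f(x) > 0\} = \bigcup_{n \geq 1} D_n$, where $D_n = \{x : \omega_f(x) \geq 1/n\}$. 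A second preliminary step I would establish is that each $D_n$ is \emph{closed}: the map $x \mapsto \omega_f(x)$ is upper semicontinuous, hence $\{x : \omega_f(x) < 1/n\}$ is open. Because a countable union of null sets is null, it then suffices to control each $D_n$ separately.

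For the forward implication, assume $f \in \mathcal{R}[a,b]$. Boundedness is immediate, since an unbounded function produces unbounded Riemann sums on any partition and so cannot be integrable. To see that $D$ is null, I would fix $n$ and $\eps > 0$ and use the Darboux criterion to choose a partition $P$ with $U(f,P) - L(f,P) < \eps/n$. Every subinterval of $P$ whose interior meets $D_n$ satisfies $M_i - m_i \geq 1/n$, so if $A$ indexes those subintervals then $\tfrac{1}{n}\sum_{i \in A} \Delta x_i \leq \sum_{i \in A}(M_i - m_i)\Delta x_i \leq U(f,P) - L(f,P) < \eps/n$, giving $\sum_{i \in A}\Delta x_i < \eps$. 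The finitely many partition points contribute nothing to measure, so $D_n$ is covered by sets of total length $< \eps$; thus each $D_n$, and hence $D$, is null.

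For the backward implication, assume $f$ bounded with $|f| \leq M$ and $D$ null. Given $\eps > 0$, I would pick $n$ with $(b-a)/n < \eps/2$. Then $D_n \subseteq D$ is null and, being closed and bounded, compact; I cover it by open intervals of total length $< \eps/(4M)$ and extract a finite subcover $I_1, \dots, I_k$ by compactness. The remaining set $K = [a,b] \setminus \bigcup_j I_j$ is compact with $\omega_f(x) < 1/n$ for every $x \in K$, so each such $x$ admits an open interval $J_x$ on which $f$ oscillates by less than $1/n$. The intervals $\{I_j\} \cup \{J_x\}$ cover $[a,b]$; extracting a finite subcover and letting $P$ be the partition generated by its endpoints (clipped to $[a,b]$), every subinterval of $P$ lies in some $I_j$ (oscillation $\leq 2M$, total length $< \eps/(4M)$) or in some $J_x$ (oscillation $< 1/n$, total length $\leq b-a$). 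Hence $U(f,P) - L(f,P) < 2M \cdot \tfrac{\eps}{4M} + \tfrac{1}{n}(b-a) < \eps$, and integrability follows from the Darboux criterion.

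The main obstacle is the backward direction: converting the purely measure-theoretic hypothesis that $D$ is null into the \emph{finitely many} intervals required to build a single partition. The decisive point is that each level set $D_n$ is not merely null but compact, which upgrades a cover of small total length to a \emph{finite} subcover; without the upper-semicontinuity lemma giving closedness of $D_n$, one would obtain only a countable cover and could not assemble a finite partition. Everything else — the equivalence of continuity with vanishing oscillation and the bookkeeping in the Darboux criterion — is routine.
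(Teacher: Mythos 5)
You should know that the paper does not prove this theorem at all: it appears in the list of standard auxiliary results (``Приведём ряд общеизвестных теорем'') and is delegated to the textbook reference \cite{Redkozubov}, so there is no internal proof to compare yours against. On its own merits, your argument is the classical oscillation proof of the Lebesgue criterion, and its structure is correct in both directions: the decomposition $D = \bigcup_n D_n$ of the discontinuity set, the upper semicontinuity of $x \mapsto \omega_f(x)$ giving closedness of each $D_n$, the Darboux estimate $\frac{1}{n}\sum_{i \in A}\Delta x_i \leq U(f,P)-L(f,P)$ in the forward direction (with the finitely many partition points correctly set aside), and the compactness upgrade from a null cover of $D_n$ to a finite cover in the backward direction are exactly the right ingredients.

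One detail in the backward direction deserves a touch-up. A subinterval of the partition generated by the endpoints of the finite subcover is contained only in the \emph{closure} of one of the covering open intervals, not in the open interval itself, since the subinterval's endpoints may coincide with endpoints of that interval. For the $I_j$ this is harmless: the bound $2M$ on the oscillation holds on any set. But for the $J_x$ the oscillation of $f$ over $\overline{J_x}$ is not controlled by the oscillation over $J_x$ --- the value of $f$ at an endpoint of $J_x$ can jump (take $f=0$ on $(0,1)$, $f(1)=1$: oscillation $0$ on the open interval, $1$ on the closed one). The standard repair is to shrink: since $\omega_f(x) < 1/n$, pick $\delta_x$ with $\omega(f,x,\delta_x) < 1/n$ and set $J_x = \left(x - \frac{\delta_x}{2},\, x + \frac{\delta_x}{2}\right)$; then $\overline{J_x} \subset (x - \delta_x, x + \delta_x)$, so the oscillation of $f$ on $\overline{J_x}$ is still $< 1/n$, and your final estimate $U(f,P) - L(f,P) < 2M\cdot\frac{\eps}{4M} + \frac{b-a}{n} < \eps$ goes through verbatim. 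With this one-line fix the proof is complete.
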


\begin{theorem} (Об интеграле с переменным верхним пределом) \label{variable_bound_integral}
	Пусть $f \in \mathcal{R}[a, b]$,\\ $F: [a, b] \to \R$ --- интеграл с переменным верхним пределом, т.е. $F(x) = \int_a^x f(t)dt$. Тогда
	\begin{itemize}
		\item $F \in \C[a, b]$
		\item Если $f$ непрерывна в точке $x \in [a, b]$, то $F$ дифференцируема в $x$ и $F'(x) = f(x)$.
	\end{itemize}
\end{theorem}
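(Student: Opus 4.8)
The plan is to prove the two assertions separately, relying on only two elementary consequences of Riemann integrability: that $f$ is bounded on $[a,b]$ (which follows from the definition, or equivalently from the Lebesgue criterion stated above, since boundedness is part of it), and that the integral is additive over adjacent subintervals, so that for all $x, y \in [a,b]$ one has $F(y) - F(x) = \int_x^y f(t)\,dt$.

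First I would establish continuity. Let $M$ be a constant with $\md{f(t)} \le M$ for all $t \in [a,b]$. For arbitrary $x, y \in [a,b]$ the additivity relation together with the standard estimate of an integral by a bound on the integrand gives
\[
\md{F(y) - F(x)} = \left|\int_x^y f(t)\,dt\right| \le M\,\md{y - x}.
\]
Thus $F$ is Lipschitz with constant $M$, hence uniformly continuous on $[a,b]$, which proves $F \in \C[a,b]$. Note this step uses boundedness alone and holds at every point, regardless of whether $f$ is continuous there.

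Next I would prove differentiability at a point $x_0$ at which $f$ is continuous, with value $F'(x_0) = f(x_0)$. For $h$ small enough that $x_0 + h \in [a,b]$, additivity yields $F(x_0 + h) - F(x_0) = \int_{x_0}^{x_0 + h} f(t)\,dt$, while $f(x_0) = \tfrac{1}{h}\int_{x_0}^{x_0+h} f(x_0)\,dt$ since the integrand is constant. Subtracting,
\[
\frac{F(x_0+h) - F(x_0)}{h} - f(x_0) = \frac{1}{h}\int_{x_0}^{x_0+h}\bigl(f(t) - f(x_0)\bigr)\,dt.
\]
Given $\eps > 0$, continuity at $x_0$ provides $\delta > 0$ with $\md{f(t) - f(x_0)} < \eps$ whenever $\md{t - x_0} < \delta$ and $t \in [a,b]$; hence for $0 < \md{h} < \delta$ the integral on the right is bounded in absolute value by $\eps\,\md{h}$, so the whole expression has modulus at most $\eps$. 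Therefore the difference quotient tends to $f(x_0)$ as $h \to 0$, giving $F'(x_0) = f(x_0)$. At the endpoints $a$ and $b$ the identical estimate applies to the admissible one-sided difference quotient and yields the corresponding one-sided derivative.

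I do not anticipate a genuine obstacle, since this is the classical first fundamental theorem of calculus; the only points demanding a little care are the justification that $f$ is bounded (which is what makes the Lipschitz estimate, and hence continuity, available everywhere) and the restriction of $h$ to values keeping $x_0 + h$ inside $[a,b]$ near the endpoints. The conceptual crux is the continuity-at-a-single-point estimate above: it is precisely the local smallness of $f(t) - f(x_0)$, rather than any global regularity of $f$, that forces the difference quotient to converge to $f(x_0)$.
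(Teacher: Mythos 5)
Your proof is correct and complete: the Lipschitz estimate $\md{F(y)-F(x)}\le M\md{y-x}$ from boundedness gives continuity everywhere, and the difference-quotient estimate $\bigl|\tfrac{1}{h}\int_{x_0}^{x_0+h}(f(t)-f(x_0))\,dt\bigr|\le\eps$ at a point of continuity gives $F'(x_0)=f(x_0)$, with one-sided quotients at the endpoints. The paper itself states this theorem without proof, as one of several standard facts cited to a textbook, and your argument is exactly the classical one that such a reference contains, so there is nothing to compare beyond noting full agreement with the standard route.
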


\begin{theorem} (О почленном дифференцировании функциональных рядов) \label{memberwise_differentiation}
	Пусть $I$ --- невырожденный промежуток, $\{F_n : I \to \R\}_{n=1}^\infty$ --- функциональная последовательность такая, что
	\begin{enumerate}
		\item $\forall n \in \N \ F_n$ дифференцируема на $I$;
		\item $\sum_{n=1}^\infty F_n$ сходится к функции $F$ хотя бы в одной точке;
		\item $\sum_{n=1}^\infty F_n'$ сходится равномерно к функции $f$ на $I$.
	\end{enumerate}
	Тогда $F$ дифференцируема на $I$ и $F' = f$.
\end{theorem}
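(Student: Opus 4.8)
The plan is to pass to the partial sums and thereby reduce the statement to the corresponding theorem on termwise differentiation of \emph{sequences}, whose proof rests on the mean value theorem. Put $S_N = \sum_{n=1}^N F_n$ and observe that each $S_N$ is differentiable on $I$ with $S_N' = \sum_{n=1}^N F_n'$. By hypothesis~(2) there is a point $x_0 \in I$ at which $S_N(x_0)$ converges, and by hypothesis~(3) the sequence $S_N'$ converges to $f$ uniformly on $I$; in particular $\{S_N'\}$ is uniformly Cauchy. The goal then becomes showing that $S_N$ converges pointwise to $F$, that $F$ is differentiable, and that $F' = f$.

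First I would establish the basic estimate. Fix arbitrary $x, c \in I$ with $x \neq c$. For indices $m < n$ the function $S_n - S_m$ is differentiable on $I$, so the mean value theorem yields a point $\xi$ strictly between $x$ and $c$ with
\[
\md{\ps{S_n - S_m}(x) - \ps{S_n - S_m}(c)} = \md{S_n'(\xi) - S_m'(\xi)} \cdot \md{x - c}.
\]
Given $\eps > 0$, uniform Cauchyness of $\{S_N'\}$ furnishes an $M$ such that $\md{S_n'(t) - S_m'(t)} < \eps$ for all $t \in I$ and all $n, m \geq M$; hence the left-hand side is at most $\eps \md{x - c}$ whenever $n, m \geq M$. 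Applying this with $c = x_0$ on a bounded subinterval $J \subset I$ containing $x_0$, and combining it with convergence of $S_N(x_0)$, shows that $\{S_N\}$ is uniformly Cauchy on $J$; since every point of $I$ lies in such a $J$, the limit $F(x) = \lim_N S_N(x)$ exists throughout $I$.

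To obtain differentiability, fix $c \in I$ and introduce the difference quotients $\varphi_N(x) = \ps{S_N(x) - S_N(c)}/(x - c)$ for $x \neq c$, so that $\lim_{x \to c} \varphi_N(x) = S_N'(c)$. The estimate above says precisely that $\md{\varphi_n(x) - \varphi_m(x)} \leq \eps$ for $n, m \geq M$ and all $x \neq c$, so $\{\varphi_N\}$ converges uniformly in $x$ on $I \setminus \{c\}$ to $\varphi(x) = \ps{F(x) - F(c)}/(x - c)$. The crux is then the interchange of the limits $N \to \infty$ and $x \to c$: since $\varphi_N \to \varphi$ uniformly near $c$, since each inner limit $\lim_{x \to c} \varphi_N(x) = S_N'(c)$ exists, and since $S_N'(c) \to f(c)$, the standard iterated-limit (Moore--Osgood) theorem guarantees that $\lim_{x \to c} \varphi(x)$ exists and equals $\lim_N S_N'(c) = f(c)$. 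By the definition of $\varphi$ this is exactly $F'(c) = f(c)$, and as $c \in I$ was arbitrary the proof is finished.

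I expect this final interchange to be the main obstacle, and it is where the mean value theorem does its essential work. One must verify that the convergence $\varphi_N \to \varphi$ is uniform on a punctured neighbourhood of $c$ rather than merely pointwise, for only the uniform statement licenses swapping the two limits; this uniformity is supplied by the estimate $\md{\varphi_n(x) - \varphi_m(x)} \leq \eps$, which in turn is exactly the content of the mean value theorem applied to $S_n - S_m$. The remaining points --- the reduction to partial sums, the localisation to bounded subintervals to accommodate an unbounded $I$, and the uniform Cauchy argument for $F$ itself --- are routine once this estimate is in hand.
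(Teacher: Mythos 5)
Your proof is correct. The paper itself gives no proof of this theorem --- it is listed among the well-known auxiliary results with a reference to \cite{Redkozubov} --- and your argument is precisely the canonical textbook one (cf.\ Rudin, \emph{Principles of Mathematical Analysis}, Theorem~7.17): the mean value theorem applied to $S_n - S_m$ yields the uniform estimate on difference quotients, which both upgrades convergence at the single point $x_0$ to convergence of $S_N$ on all of $I$ (via localisation to bounded subintervals) and makes $\varphi_N \to \varphi$ uniform near $c$, so that the Moore--Osgood interchange of the limits $N \to \infty$ and $x \to c$ legitimately delivers $F'(c) = f(c)$.
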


\begin{theorem} (Признак Вейерштрасса)
	Пусть $\{a_n\}_{n=1}^\infty$ --- числовая последовательность, $\{f_n: X \subset\ \R \to \R\}_{n=1}^\infty$ --- функциональная последовательность и $\forall n \in \N, x \in X \ |f_n(x)| < a_n$. Тогда из сходимости ряда $\sum_{n=1}^\infty a_n$ следует равномерная сходимость ряда $\sum_{n=1}^\infty f_n$ на $X$.
\end{theorem}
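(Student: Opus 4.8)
План состоит в том, чтобы сначала установить поточечную (абсолютную) сходимость ряда $\sum_{n=1}^\infty f_n$, тем самым корректно определив предельную функцию $S$, а затем оценить остаток ряда равномерно по $x$ через остаток числового ряда $\sum_{n=1}^\infty a_n$. Преимущество такого пути в том, что он опирается лишь на признак сравнения для числовых рядов и на стремление остатка сходящегося ряда к нулю, не требуя отдельной формулировки критерия Коши равномерной сходимости.

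Сначала я бы отметил, что из неравенства $\md{f_n(x)} < a_n$ (взяв любую точку $x \in X$) следует $a_n > 0$, так что $\sum_{n=1}^\infty a_n$ --- ряд с положительными членами. Зафиксировав произвольную точку $x \in X$, по признаку сравнения из сходимости $\sum_{n=1}^\infty a_n$ и неравенства $\md{f_n(x)} < a_n$ получаем сходимость ряда $\sum_{n=1}^\infty \md{f_n(x)}$, а значит и абсолютную (следовательно, обычную) сходимость ряда $\sum_{n=1}^\infty f_n(x)$. Обозначим его сумму через $S(x)$; тем самым определена предельная функция $S: X \to \R$.

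Далее ключевой шаг --- равномерная оценка остатка. Обозначим через $S_n(x) = \sum_{k=1}^n f_k(x)$ частичные суммы, а через $R_n = \sum_{k=n+1}^\infty a_k$ --- остаток числового ряда. Для любого $x \in X$ и любого $n \in \N$ имеем
\[
	\md{S(x) - S_n(x)} = \md{\sum_{k=n+1}^\infty f_k(x)} \leq \sum_{k=n+1}^\infty \md{f_k(x)} \leq \sum_{k=n+1}^\infty a_k = R_n,
\]
где переход к бесконечной сумме модулей законен ввиду уже доказанной абсолютной сходимости. Поскольку $\sum_{n=1}^\infty a_n$ сходится, её остаток удовлетворяет $R_n \to 0$ при $n \to \infty$. Значит, по заданному $\eps > 0$ можно выбрать номер $N$ так, что $R_N < \eps$; тогда при всех $n \geq N$ и всех $x \in X$ сразу выполнено $\md{S(x) - S_n(x)} \leq R_n \leq R_N < \eps$. Так как номер $N$ не зависит от $x$, это и есть равномерная сходимость ряда $\sum_{n=1}^\infty f_n$ к $S$ на $X$.

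Основная тонкость, за которой надо аккуратно следить, --- это различение двух сортов сходимости: поточечная сходимость даёт саму функцию $S$ (без неё запись $\md{S(x)-S_n(x)}$ не имела бы смысла), а равномерность обеспечивается тем, что мажоранта $R_n$ одна и та же для всех точек $x$. Серьёзного препятствия в доказательстве я не ожидаю: все переходы опираются на признак сравнения, неравенство треугольника для сходящихся рядов и стремление остатка к нулю. Альтернативно то же утверждение можно провести через критерий Коши равномерной сходимости, оценивая $\md{S_m(x) - S_n(x)} \leq \sum_{k=n+1}^m a_k$ для $m > n$, однако приведённый вариант с остатком представляется более прямым.
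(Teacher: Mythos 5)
Ваше доказательство корректно, но сравнивать его в данном случае не с чем: в статье признак Вейерштрасса приведён в разделе вспомогательных результатов как общеизвестная теорема без доказательства (со ссылкой на учебник \cite{Redkozubov}). Ваш аргумент --- поточечная абсолютная сходимость по признаку сравнения, а затем равномерная оценка остатка $\md{S(x) - S_n(x)} \leq R_n$ хвостом числового ряда --- это стандартное доказательство признака Вейерштрасса, и все переходы в нём законны (включая замечание, что строгое неравенство $\md{f_n(x)} < a_n$ при непустом $X$ гарантирует $a_n > 0$, хотя положительность $a_n$ для доказательства и не нужна --- достаточно сходимости мажорирующего ряда).
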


\begin{theorem} (О непрерывности равномерно сходящегося ряда)
	Если функциональный ряд $\sum_{k=1}^\infty f_k$ сходится равномерно к функции $f$ на множестве $E$ и все $f_k$ непрерывны в точке $x \in E$, то и $f$ непрерывна в $x$.
\end{theorem}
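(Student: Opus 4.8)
План состоит в применении стандартного $\eps/3$-рассуждения к частичным суммам ряда. Обозначим через $S_n = \sum_{k=1}^n f_k$ частичную сумму порядка $n$. Ключевое (и по существу единственное содержательное) наблюдение заключается в том, что каждая $S_n$, будучи конечной суммой функций, непрерывных в точке $x$, сама непрерывна в $x$ --- это элементарный факт о непрерывности суммы конечного числа функций. Тем самым задача сводится к переносу непрерывности с частичных сумм на предельную функцию $f$ за счёт равномерной сходимости.

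Зафиксируем произвольное $\eps > 0$. Сначала я бы воспользовался равномерной сходимостью $S_n \to f$ на $E$: по определению существует номер $N$ такой, что $\md{S_N(y) - f(y)} < \eps/3$ сразу для всех $y \in E$ (в частности, и при $y = x$). Лишь после фиксации этого $N$ я бы привлёк непрерывность конкретной частичной суммы $S_N$ в точке $x$ и выбрал $\delta > 0$ так, чтобы для всех $y \in E$ с $\md{y - x} < \delta$ выполнялось $\md{S_N(y) - S_N(x)} < \eps/3$.

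Тогда для всех таких $y$ оценка
\[
\md{f(y) - f(x)} \leq \md{f(y) - S_N(y)} + \md{S_N(y) - S_N(x)} + \md{S_N(x) - f(x)} < \eps
\]
немедленно даёт требуемое, то есть непрерывность $f$ в точке $x$. Основная тонкость, на которую я бы обратил внимание, --- это правильный порядок выбора постоянных: номер $N$ определяется по равномерной сходимости \emph{независимо} от рассматриваемой точки, и именно равномерность (а не просто поточечная сходимость) гарантирует, что одно и то же $N$ пригодно одновременно и в переменной точке $y$, и в точке $x$; при поточечной сходимости такое рассуждение провалилось бы.
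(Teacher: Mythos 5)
Ваше доказательство корректно: это стандартное $\eps/3$-рассуждение с непрерывностью конечных частичных сумм $S_N$ и правильным порядком выбора констант --- сначала $N$ по равномерной сходимости (одно для всех точек множества $E$), затем $\delta$ по непрерывности уже фиксированной $S_N$ в точке $x$. В самой статье эта теорема приведена в разделе вспомогательных результатов без доказательства, как общеизвестный факт со ссылкой на \cite{Redkozubov}, так что сравнивать не с чем; ваш вариант --- именно каноническое доказательство этого утверждения, и замечание о том, что при лишь поточечной сходимости рассуждение провалилось бы, по существу верно указывает место, где используется равномерность.
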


\begin{theorem} (Об интегрируемости равномерно сходящегося ряда)
	Если функциональный ряд $\sum_{k=1}^\infty f_k$ сходится равномерно к функции $f$ на множестве $E$ и все $f_k$ интегрируемы на $[a, b]$, то и $f \in \mathcal{R}[a, b]$.
\end{theorem}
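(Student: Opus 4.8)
The plan is to verify the two hypotheses of the Lebesgue criterion (stated above) for the limit function $f$ on $[a,b]$, tacitly assuming $[a,b] \subseteq E$ so that the uniform convergence of $\sum_{k=1}^\infty f_k$ holds on $[a,b]$. Concretely, I would show that $f$ is bounded on $[a,b]$ and that its set of points of discontinuity has Lebesgue measure zero; the conclusion $f \in \mathcal{R}[a,b]$ is then immediate from the criterion.

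For boundedness I would use the uniform convergence directly. Taking $\eps = 1$ in the definition of uniform convergence of the series, there is an index $N$ such that $|f(x) - S_N(x)| < 1$ for all $x \in [a,b]$, where $S_N = \sum_{k=1}^N f_k$ is the $N$-th partial sum. Since each $f_k$ is Riemann integrable it is bounded, and a finite sum of bounded functions is bounded; hence $S_N$ is bounded on $[a,b]$, say $|S_N| \le M$. Then $|f(x)| \le |S_N(x)| + 1 \le M + 1$ for all $x$, so $f$ is bounded.

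For the discontinuity set I would bound it by a countable union of null sets. Let $D_k$ be the set of points of $[a,b]$ at which $f_k$ is discontinuous. By the Lebesgue criterion applied to each integrable $f_k$, every $D_k$ has measure zero. If $x \in [a,b] \setminus \bigcup_{k=1}^\infty D_k$, then all the terms $f_k$ are continuous at $x$, so by the theorem on continuity of a uniformly convergent series $f$ is also continuous at $x$. Therefore the set of discontinuities of $f$ is contained in $\bigcup_{k=1}^\infty D_k$, a countable union of measure-zero sets, which is itself of measure zero. Applying the Lebesgue criterion to $f$ completes the argument.

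This is a classical fact, so I do not expect a genuine obstacle; the only point requiring care is the passage from the stated continuity theorem, which is phrased in terms of the terms $f_k$ rather than the partial sums, to control of the discontinuity set of $f$. Recognizing that continuity of $f$ is guaranteed precisely off $\bigcup_{k=1}^\infty D_k$, combined with the fact that a countable union of null sets is null, is what makes the Lebesgue-criterion route clean; an alternative would be a direct Darboux-sum estimate, but that would not exploit the auxiliary theorems already assembled in this section.
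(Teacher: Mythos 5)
Your proof is correct and complete: boundedness of $f$ via a single partial sum $S_N$ with $\|f - S_N\|_\infty < 1$, and the inclusion of the discontinuity set of $f$ in the null set $\bigcup_{k=1}^\infty D_k$ via the continuity theorem, are exactly what the Lebesgue criterion needs (one can always apply the continuity theorem with $E$ replaced by $[a,b]$, since uniform convergence passes to subsets, so your tacit assumption $[a,b] \subseteq E$ is harmless). Note, however, that the paper gives no proof of this statement at all --- it is listed among the well-known auxiliary theorems with a reference to \cite{Redkozubov} --- so there is no in-paper argument to compare against; your route is the standard one and fits the toolkit the paper assembles in that section.
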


\begin{theorem} (Критерий выпуклости)
	Пусть $f$ дифференцируема на $(a, b)$. Тогда $f$ вогнута (выпукла) на $(a, b) \Longleftrightarrow f'$ возрастает (убывает) на $(a, b)$.
\end{theorem}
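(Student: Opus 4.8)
The plan is to prove both equivalences at once by reducing to a single case. Since replacing $f$ by $-f$ interchanges concavity with convexity and turns an increasing derivative into a decreasing one, it suffices to prove, say, that $f$ is concave of the $\cup$-type (i.e. convex in the secant sense) on $(a,b)$ if and only if $f'$ is increasing there. Throughout I would use the chord definition: $f$ has this shape on $(a,b)$ when for all $x_1<x_2$ in $(a,b)$ and all $\lambda\in[0,1]$ one has $f(\lambda x_1+(1-\lambda)x_2)\le \lambda f(x_1)+(1-\lambda)f(x_2)$, which I would rewrite in the equivalent three-point form: for $x_1<x<x_2$,
\[
\frac{f(x)-f(x_1)}{x-x_1}\ \le\ \frac{f(x_2)-f(x_1)}{x_2-x_1}\ \le\ \frac{f(x_2)-f(x)}{x_2-x}.
\]

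For the direction ``$f'$ increasing $\Rightarrow$ $f$ convex'', I would fix $x_1<x<x_2$ and apply the Lagrange mean value theorem separately on $[x_1,x]$ and on $[x,x_2]$, producing $\xi_1\in(x_1,x)$ and $\xi_2\in(x,x_2)$ with $\frac{f(x)-f(x_1)}{x-x_1}=f'(\xi_1)$ and $\frac{f(x_2)-f(x)}{x_2-x}=f'(\xi_2)$. Since $\xi_1<\xi_2$ and $f'$ is increasing, $f'(\xi_1)\le f'(\xi_2)$, which is precisely the inequality $\frac{f(x)-f(x_1)}{x-x_1}\le\frac{f(x_2)-f(x)}{x_2-x}$; clearing the (positive) denominators and collecting terms rearranges this into $f(x)\le\frac{x_2-x}{x_2-x_1}f(x_1)+\frac{x-x_1}{x_2-x_1}f(x_2)$, i.e. convexity, as every interior $x$ corresponds to some $\lambda\in(0,1)$ and the endpoints are trivial. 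A strict monotonicity of $f'$ makes $f'(\xi_1)<f'(\xi_2)$, transferring directly to strict convexity.

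For the converse ``$f$ convex $\Rightarrow$ $f'$ increasing'', I would fix $x_1<x_2$ and use the three-point inequality to observe that the slope $\frac{f(x)-f(x_1)}{x-x_1}$ is nondecreasing in $x$ and that $\frac{f(x_2)-f(x)}{x_2-x}$ is nondecreasing in $x$ as well. Letting $x\to x_1^{+}$ in the left inequality gives $f'(x_1)\le\frac{f(x_2)-f(x_1)}{x_2-x_1}$, while letting $x\to x_2^{-}$ in the right inequality gives $\frac{f(x_2)-f(x_1)}{x_2-x_1}\le f'(x_2)$; chaining the two yields $f'(x_1)\le f'(x_2)$. For the strict reading I would instead compare through a fixed interior point (say the midpoint), where strict convexity forces the middle slope inequality to be strict and hence $f'(x_1)<f'(x_2)$.

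The main obstacle, modest as it is, lies in this converse direction: one must verify that the one-sided limits of the difference quotients genuinely equal $f'(x_1)$ and $f'(x_2)$ (which is where differentiability on the open interval, giving two-sided derivatives, is used) and that passing to these limits preserves the non-strict inequalities, taking care to send the intermediate point to the correct endpoint in each chord so that no information is lost. The forward direction is essentially immediate once the straddling mean-value construction is set up.
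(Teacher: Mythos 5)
Your proof is correct. Note that the paper itself gives no proof of this statement: it is listed among the auxiliary well-known theorems with a pointer to the textbook \cite{Redkozubov}, so there is no in-paper argument to compare against. What you wrote is the standard textbook proof --- Lagrange's mean value theorem on the two subintervals $[x_1,x]$ and $[x,x_2]$ for the direction from monotone $f'$ to the chord inequality, and the three-chord (slope monotonicity) lemma combined with one-sided limits of difference quotients for the converse --- and you handled the one genuinely delicate point of the statement, namely the paper's terminological convention pairing \emph{вогнута} with возрастающей $f'$, correctly via the $f \mapsto -f$ symmetry. The limit step you flag as the main obstacle is indeed unproblematic: since $x_1, x_2$ are interior points of $(a,b)$, differentiability there makes the one-sided limits of the slopes equal to $f'(x_1)$ and $f'(x_2)$, weak inequalities pass to the limit, and your midpoint comparison $f'(x_1) \leq \frac{f(m)-f(x_1)}{m-x_1} < \frac{f(x_2)-f(m)}{x_2-m} \leq f'(x_2)$ correctly recovers the strict version needed if возрастание is read as strict monotonicity.
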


\section{Доказательство теоремы \ref{main_prop}}
		
	 Построим последовательность функций $\{f_k \colon [-1, 1] \to \R\}_{k=1}^\infty$. Искомая функция $f$ будет представлять собой ряд:
	 \[
	 	f(x) = \sum_{k=1}^\infty \frac{f_k(x)}{2^k}.
	 \]
	
	Чтобы определить $f_1$, воспользуемся тем, что любое число $x \in [0, 1)$ лежит в единственном полуинтервале $\left[1 - \frac{1}{n}, 1 - \frac{1}{n+1}\right)$ для некоторого $n \in \N$, а значит, однозначно записывается в виде $(1-t)\ps{1 - \frac{1}{n}} + t\ps{1 - \frac{1}{n+1}}, \ n \in \N, \ t \in [0, 1)$. Используя это представление,
	 \[
	 	f_1(x) = \begin{cases}
	 		(-1)^n(1 - 2t), & x \in \left[\frac{1}{2}, 1\right); \\
	 		t, & x \in \left[0, \frac{1}{2}\right); \\
	 		0, & x = 1; \\
	 		-f_1(-x), & x \in [-1, 0).
	 	\end{cases}
	 \]
	 
	 По сути функция $f_1$ представляет собой что-то похожее на $\sin(\frac{\pi}{x^2-1}) \cdot \sign(x)$, но является линейной на каждом промежутке монотонности, что будет доказано ниже. Заметим, что область значений $f_1$ та же, что и область определения --- отрезок $[-1, 1]$. С учётом этого индуктивное построение функций $f_k$ при $k > 1$ осуществляется очень просто:
	 \[
	 	f_k(x) = f_1(f_{k-1}(x)).
	 \]
	 Далее будет также полезно более общее определение, которое тривиально проверяется по индукции:
	 \[
	 	\forall l < k \ \ f_k(x) = f_l(f_{k-l}(x)).
	 \]
	
	\begin{figure}[H]
	\begin{center}
		\begin{tikzpicture}[scale=4]
		\begin{scope}[xscale=3.5]
		\tikzmath{
			\K=9;
			\M=\K+1;
		}
		\draw[very thin] (1/\M,0) -- (1-1/\M,0);
		\draw[very thin,-latex] (1,0) -- (1.1,0) node[above] {$x$};
		\draw[very thin,-latex] (0,-1.2) -- (0,1.2) node[right] {$y$};
		\node[below right] at (0,1) {$1$};
		\node[below right] at (0,0) {$-1$};
		\node[above right] at (0,-1) {$-1$};
		\node[above left] at (1/2,0) {$0$};
		\node[below right] at (1,0) {$1$};
		\draw[very thin] (1,-1) -- (1,1);
		\node[scale=2] at (0,0) {.};
		\node[scale=2] at (1,0) {.};
		\node[scale=1] at (1/\M/2,1/2) {...};
		\draw[dotted] (0,0) -- (1/\M, 0);
		\node[scale=1] at (1/\M/2,-1/2) {...};
		\node[scale=1] at (1-1/\M/2,1/2) {...};
		\draw[dotted] (1-1/\M,0) -- (1, 0);
		\node[scale=1] at (1-1/\M/2,-1/2) {...};
		\draw[very thin, gray] (0, 1) -- (1, 1);
		\draw[very thin, gray] (0, -1) -- (1, -1);
		\draw (1/3,-1) -- (2/3,1);
		\foreach \n [
			evaluate=\n as \s using (-1)^(\n)
		] in {3,...,\K} {
			\draw (1/\n,\s) -- ({1/(\n+1)}, -\s);
			\node[color=blue,scale=2] at (1/\n,0) {.};
			\draw ({1-1/\n},-\s) -- ({1-1/(\n+1)}, \s);
			\node[color=blue,scale=2] at (1-1/\n,0) {.};
			\ifthenelse{\n<7}{%
				\draw[very thin, gray, dashed] (1/\n,0) -- (1/\n, \s);
				\draw[very thin, gray, dashed] (1-1/\n,0) -- (1-1/\n, -\s);
			}{}
			\ifthenelse{\n=3}{%
				\node[above] at (1/\n,0) {$\frac{1-n}{n}$};
				\node[below] at ({1/(\n+1)},0) {$\frac{-n}{n+1}$};
			}{}
			\ifthenelse{\n=3}{%
				\node[above] at ({(\n-1)/\n},0) {$\frac{n-1}{n}$};
				\node[below] at ({\n/(\n+1)},0) {$\frac{n}{n+1}$};
			}{}
		}
		\node[align=center,black] at (1/2,-9/8) {$f_1$};
		\end{scope}
	\end{tikzpicture}
		\begin{tikzpicture}[scale=4]
		\begin{scope}[xscale=3.5]
		\tikzmath{
			\K=9;
			\M=\K+1;
			\KK = 7;
			\MM = \KK + 2;
		}
		\draw[very thin] (1/\M,0) -- (1/4,0);
		\draw[very thin] (3/4,0) -- (1-1/\M,0);
		\draw[very thin] (1/4 + 1/60,0) -- (1/3 - 1/60,0);
		\draw[very thin] (2/3 + 1/60,0) -- (3/4 - 1/60,0);
		\draw[very thin] ({1/3 + 1/(3*\MM)},0) -- ({2/3-1/(3*\MM)},0);
		\draw[very thin,-latex] (1,0) -- (1.1,0) node[above] {$x$};
		\draw[very thin,-latex] (0,-1.2) -- (0,1.2) node[right] {$y$};
		\node[below right] at (0,1) {$1$};
		\node[below right] at (0,0) {$-1$};
		\node[above right] at (0,-1) {$-1$};
		\node[above left] at (1/2,0) {$0$};
		\node[below right] at (1,0) {$1$};
		\draw[very thin] (1,-1) -- (1,1);
		\node[scale=2] at (0,0) {.};
		\node[scale=2] at (1,0) {.};
		\node[scale=1] at (1/\M/2,1/2) {...};
		\draw[dotted] (0,0) -- (1/\M, 0);
		\node[scale=1] at (1/\M/2,-1/2) {...};
		\node[scale=1] at (1-1/\M/2,1/2) {...};
		\draw[dotted] (1-1/\M,0) -- (1, 0);
		\node[scale=1] at (1-1/\M/2,-1/2) {...};
		\draw[very thin, gray] (0, 1) -- (1, 1);
		\draw[very thin, gray] (0, -1) -- (1, -1);

		\node[scale=1] at ({1/3 + 1/(3*2*\KK)},1/2) {...};
		\draw[dotted] (1/3 - 1/60, 0) -- (1/3 + 1/\MM, 0);
		\node[scale=1] at ({1/3 + 1/(3*2*\KK)},-1/2) {...};
		\node[scale=1] at ({2/3 - 1/(3*2*\KK)},1/2) {...};
		\draw[dotted] (2/3-1/\MM,0) -- (2/3 + 1/60, 0);
		\node[scale=1] at ({2/3 - 1/(3*2*\KK)},-1/2) {...};

		\draw[dotted] (1/4+1/60,0) -- (1/4, 0);
		\draw[dotted] (3/4-1/60,0) -- (3/4, 0);

		\draw[very thin, gray, dashed] (1/4,-1) -- (1/4, -1/6);
		\draw[very thin, gray, dashed] (1/4,0) -- (1/4, 1);
		\draw[very thin, gray, dashed] (3/4,-1) -- (3/4, -1/6);
		\draw[very thin, gray, dashed] (3/4,0) -- (3/4, 1);
		\draw[very thin, gray, dashed] (1/3,-1) -- (1/3, 0);
		\draw[very thin, gray, dashed] (1/3,1/6) -- (1/3, 1);
		\draw[very thin, gray, dashed] (2/3,-1) -- (2/3, 0);
		\draw[very thin, gray, dashed] (2/3,1/6) -- (2/3, 1);

		\draw (4/9,-1) -- (5/9,1);
		\foreach \n [
			evaluate=\n as \s using (-1)^(\n)
		] in {3,...,\KK} {
			\draw ({1/3 + 1/(3*\n)},\s) -- ({1/3 + 1/(3*(\n+1))}, -\s);
			\node[color=blue,scale=2] at (1/\n,0) {.};
			\draw ({2/3-1/(3*\n)},-\s) -- ({2/3-1/(3*(\n+1))}, \s);
			\node[color=blue,scale=2] at (1-1/\n,0) {.};
			\ifthenelse{\n=3}{%
				\node[above] at (1/\n,0) {$\frac{1-n}{n}$};
				\node[below] at ({1/(\n+1)},0) {$\frac{-n}{n+1}$};
			}{}
			\ifthenelse{\n=3}{%
				\node[above] at ({(\n-1)/\n},0) {$\frac{n-1}{n}$};
				\node[below] at ({\n/(\n+1)},0) {$\frac{n}{n+1}$};
			}{}
		}

		\draw (10/36,1) -- (11/36,-1);
		\draw (25/36,1) -- (26/36,-1);
		\foreach \n [
			evaluate=\n as \s using (-1)^(\n+1)
		] in {3,...,4} {
			\draw ({1/4 + 1/(12*\n)},\s) -- ({1/4 + 1/(12*(\n+1))}, -\s);
			\draw ({1/3-1/(12*\n)},-\s) -- ({1/3-1/(12*(\n+1))}, \s);
			\draw ({2/3 + 1/(12*\n)},\s) -- ({2/3 + 1/(12*(\n+1))}, -\s);
			\draw ({3/4-1/(12*\n)},-\s) -- ({3/4-1/(12*(\n+1))}, \s);
		}
		\node[align=center,black] at (1/2,-9/8) {$f_2$};
		\end{scope}
	\end{tikzpicture}
	\end{center}
	\caption{Функции $f_1$ и $f_2$.}
	\end{figure}

	Всё дальнейшее изложение будет существенно опираться на следующую лемму, описывающую структуру $f_k$:
	
	\begin{lemma} \label{f_k_properties}
		Для любого $k \in \N$ отрезок $[-1, 1]$ представим как $E_k \cup \bigcup_{i=1}^\infty A_{k,i}$, где $E_k$ --- множество точек разрыва $f_k$, а $A_{k,i}$ --- невырожденные отрезки $[a_{k,i}, b_{k,i}]$, на каждом из которых $f_k$ --- линейная функция. При этом верны следующие свойства:
		\begin{enumerate}
			
			\item $f_k(E_k) = 0$, и если $x \in E_k$, то $\forall \eps > 0$ $f_k(U_{+\eps(x)}) = [-1, 1]$ при $x \neq 1$ и $f_k(U_{-\eps}(x)) = [-1, 1]$ при $x \neq -1$;
			
			\item При всех $i$ $f_k(A_{k,i}) = [-1, 1]$;
			
			\item
			\[
				E_1 = \{-1, 1\}, \ E_{k+1} = E_k \cup \bigcup_{i=1}^\infty \{a_{k,i}, b_{k,i}\};
			\]
			
			\item
			\begin{gather*}
				\{A_{1,s} \such s \in \N\} = \set{\sbr{-\frac{1}{2}, \frac{1}{2}}} \cup \bigcup_{n=2}^\infty \set{\sbr{\frac{1}{n+1} - 1, \frac{1}{n} - 1}, \sbr{1 - \frac{1}{n}, 1 - \frac{1}{n+1}}}, \\
				\{A_{{k+1},s} \such s \in \N\} = \{h_{k,i}(A_{1,j}) \such i, j \in \N \},
			\end{gather*}
			где $h_{k,i}(x) = (b_{k,i} - a_{k,i}) \cdot \frac{x + 1}{2} + a_{k,i}$.
			
			\item $\{a_{k,i} \such i \in \N\} = \{b_{k,i} \such i \in \N\}$
		\end{enumerate}
	\end{lemma}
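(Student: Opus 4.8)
The plan is to prove all five properties simultaneously by induction on $k$, exploiting the recursion $f_{k+1} = f_1 \circ f_k$ together with the single structural observation that on every linear piece $A_{k,i}$ the map $f_k$ is an affine bijection onto $\sbr{-1,1}$ whose inverse is, up to orientation, the affine map $h_{k,i}$. Consequently the behaviour of $f_{k+1}$ on $A_{k,i}$ is merely an $h_{k,i}$-rescaled copy of the global behaviour of $f_1$ on $\sbr{-1,1}$, and the entire inductive step amounts to transporting the known structure of $f_1$ through the affine maps $h_{k,i}$.

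For the base case $k=1$ I would read the pieces straight off the definition. On $\sbr{-\frac12,\frac12}$ the function $f_1$ is the line from $-1$ to $1$; on each tooth $\sbr{1-\frac1n,1-\frac1{n+1}}$ (and on its mirror image, since $f_1$ is odd) it is the line joining $(-1)^n$ to $(-1)^{n+1}$. This gives the explicit list in item~4, shows $f_1\ps{A_{1,j}} = \sbr{-1,1}$ for every $j$ (item~2), and shows that consecutive teeth share the common value $\pm1$ at their junction, so $f_1$ is continuous at every interior breakpoint and $E_1 = \set{-1,1}$ (item~3). Since the teeth accumulate at $\pm1$ while oscillating between $-1$ and $1$, every one-sided neighbourhood of $\pm1$ is mapped onto $\sbr{-1,1}$, and $f_1(\pm1)=0$, which is item~1. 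Finally a direct count of endpoints gives $\set{a_{1,i}} = \set{b_{1,i}} = \set{\pm\ps{1-\tfrac1n} : n \geq 2}$, which is item~5.

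For the inductive step I assume the lemma for $f_k$ and use $f_{k+1}=f_1\circ f_k$ on the splitting $\sbr{-1,1} = E_k \cup \bigcup_i A_{k,i}$. At a point $x\in E_k$ every one-sided neighbourhood is already mapped by $f_k$ onto $\sbr{-1,1}$, hence by $f_{k+1}$ onto $f_1\ps{\sbr{-1,1}}=\sbr{-1,1}$, and $f_{k+1}(x)=f_1(0)=0$; so $E_k$ is inherited with the required property. On a piece $A_{k,i}$, applying $h_{k,i}$ to the decomposition $\sbr{-1,1} = E_1 \cup \bigcup_j A_{1,j}$ yields $A_{k,i} = \set{a_{k,i},b_{k,i}} \cup \bigcup_j h_{k,i}\ps{A_{1,j}}$, since $h_{k,i}\ps{\set{-1,1}} = \set{a_{k,i},b_{k,i}}$; on each $h_{k,i}\ps{A_{1,j}}$ the map $f_{k+1}$ is a composition of two affine maps, hence affine, with image $f_1\ps{A_{1,j}}=\sbr{-1,1}$, giving items~2 and~4. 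At the new points $a_{k,i},b_{k,i}$ the function $f_k$ takes the value $\pm1\in E_1$, so $f_{k+1}$ inherits the oscillation of $f_1$ near $\pm1$ and is genuinely discontinuous; moreover $f_k$ maps a one-sided neighbourhood of such a point onto a one-sided neighbourhood of $\pm1$, whose $f_1$-image is all of $\sbr{-1,1}$, and $f_{k+1}=f_1(\pm1)=0$ there. Together with the previous observation this gives items~1 and~3. Item~5 is preserved because $h_{k,i}$ is affine and $\set{a_{1,j}}=\set{b_{1,j}}$, so the left and right endpoints contributed inside each $A_{k,i}$ form the same set.

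The only places where genuine care is needed are two. First, $f_k|_{A_{k,i}}$ may be orientation-reversing, so $h_{k,i}$ is its inverse only up to the reflection $x\mapsto -x$; this is harmless because both $E_1=\set{-1,1}$ and the family $\set{A_{1,j}}$ are symmetric about $0$, so $h_{k,i}\ps{\set{-1,1}}$ and the family $\set{h_{k,i}\ps{A_{1,j}} : j\in\N}$ do not depend on the orientation. Second, one must confirm that the discontinuity set of $f_{k+1}$ is \emph{exactly} $E_{k+1}$ and no larger: away from $E_k$ and from the endpoints, $f_k$ is continuous and $f_k(x)$ avoids $E_1$, so $f_{k+1}=f_1\circ f_k$ is continuous there (merely a kink where $f_k(x)$ meets an interior breakpoint of $f_1$). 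I expect this last verification — that the kink points stay points of continuity while the endpoints become the only new discontinuities — to be the main technical obstacle.
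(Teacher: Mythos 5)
Your proposal is correct and follows essentially the same route as the paper's proof: induction on $k$ with the base case read off the definition of $f_1$, and an inductive step that splits $[-1,1]$ into $E_k$, the shared endpoints $a_{k,i}=b_{k,j}$, and the interiors of the $A_{k,i}$, transporting the structure of $f_1$ through the affine maps $h_{k,i}$ via $f_{k+1}=f_1\circ f_k$. The two delicate points you flag --- the possible orientation reversal of $f_k$ on $A_{k,i}$, resolved by the symmetry of the family $\set{A_{1,j}}$ about $0$, and the check that interior kink points stay points of continuity while only the endpoints $a_{k,i}, b_{k,i}$ become new discontinuities --- are exactly the points the paper's case analysis settles by the same computations.
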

	\begin{proof}
		По индукции.
		
		Пусть $x \in \left[\frac{1}{2}, 1\right)$, тогда рассмотрим полуинтервал $\left[1 - \frac{1}{n}, 1 - \frac{1}{n+1}\right)$, в который попадает $x$. На этом полуинтервале $f_1$ задана линейно, причём $f_1\ps{1 - \frac{1}{n}} = (-1)^{n+1}$ и
		\[
			\lim_{x \to 1 - \frac{1}{n+1}} f_1(x) = (-1)^n = (-1)^{n+1} \cdot (-1) = f_1\ps{1 - \frac{1}{n+1}}.
		\]
		Это значит, что $f_1$ линейна на всём отрезке $\sbr{1 - \frac{1}{n}, 1 - \frac{1}{n+1}}$ и $f_1\ps{\sbr{1 - \frac{1}{n}, 1 - \frac{1}{n+1}}} = [-1, 1]$.
		Отрезок $\sbr{-\frac{1}{2}, \frac{1}{2}}$ тоже удовлетворяет этому свойству, поскольку $f_1$ линейна на $\lsi{0, \frac{1}{2}}$, \\ $\lim_{x \to \frac{1}{2}} f(x) = f\ps{\frac{1}{2}} = 1$, $f_1(0) = 0$ и $f_1$ нечётна. Пусть теперь $x = 1$. Поскольку \\ $\lim_{n \to \infty} \ps{1 - \frac{1}{n}} = 1$, любая окрестность точки $1$ включает в себя отрезки $\sbr{1 - \frac{1}{n}, 1 - \frac{1}{n+1}}$ для достаточно больших $n$, а на них $f_1$ принимает все значения от $-1$ до $1$. Это создаёт разрыв второго рода в точке $1$, и по определению $f_1(1) = 0$. В силу нечётности $f_1$ всё сказанное верно и при $x < -\frac{1}{2}$. Пятый пункт для $f_1$ следует из того, что у каждого отрезка $A_{1,s}, s \in \N$ есть непосредственно следующий и предыдущий. База доказана.
		
		Докажем переход от $k$ к $k + 1$. Во-первых, при всех $i$ и $j$ $h_{k,i}(A_{1,j})$ действительно отрезок, потому что это образ отрезка под действием линейной функции. Во-вторых, поймём, почему множества $E_{k+1}$, $\set{A_{k+1,s} \such s \in \N}$, полученные по утверждаемой формуле, покрывают вместе отрезок $[-1, 1]$. Воспользуемся предположением, что это верно для $k$, и тем, что объединение образов равно образу объединения:
		\[
			\bigcup_{j=1}^\infty h_{k,i}(A_{1,j}) = h_{k,i}( \, \bigcup_{j=1}^\infty A_{1,j}) = h_{k,i}((-1, 1)) = (a_{k,i}, b_{k,i}).
		\]
		Применяя это равенство для каждого $i$,
		\[
			E_{k+1} \cup \bigcup_{s=1}^\infty A_{k+1,s} = E_k \cup \bigcup_{i=1}^\infty \{a_{k,i}, b_{k,i}\} \cup \bigcup_{i=1}^\infty \bigcup_{j=1}^\infty h_{k,i}(A_{1,j}) = E_k \cup \bigcup_{i=1}^\infty A_{k,i} = [-1, 1].
		\]
		Множества $E_k$, $\bigcup_{i=1}^\infty \{a_{k,i}, b_{k,i}\}$ = $\{a_{k,i} \such i \in \N\} = \{b_{k,i} \such i \in \N\}$ и $\bigcup_{i=1}^\infty (a_{k,i}, b_{k,i})$ рассмотрим по отдельности, поскольку на них поведение $f_{k+1}$ принципиально различается.
		\begin{itemize}
			\item $x \in E_k$. По предположению индукции $\forall \eps > 0 \ f_k(U_{\pm\eps}(x)) = [-1, 1]$ (в точках $x = \pm 1$ рассматривается только одна окрестность) и $f_k(x) = 0$. Но тогда $\forall \eps > 0 \ f_{k+1}(U_\pm\eps(x)) = f_1(f_k(U_\pm\eps(x_0))) = f_1([-1, 1]) = [-1, 1]$ и $f_{k+1}(x) = f_1(f_k(x)) = f_1(0) = 0$. То есть $x$ --- точка разрыва $f_{k+1}$, для которой верно первое свойство.
			
			\item $x = a_{k,i} = b_{k,j}$ для некоторых $i, j$. Это значит, что $f_k(x) = \pm 1$ и $f_k$ --- линейная и непостоянная функция в любых достаточно малых односторонних окрестностях $U_{+\eps}(x) \subset A_{k,i}$ и $U_{-\eps}(x) \subset A_{k,j}$. Отсюда $f_k(U_{+\eps}(x)) = U_{\alpha \eps}(f_k(x)) = U_{\alpha \eps}(\pm 1)$ и $f_k(U_{-\eps}(x)) = U_{\beta \eps}(f_k(x)) = U_{\beta \eps}(\pm 1)$, где $\alpha, \beta \neq 0$. Следовательно, $f_{k+1}(U_{+\eps}(x)) = f_1(U_{\alpha \eps}(\pm 1)) = [-1, 1]$ и $f_{k+1}(U_{-\eps}(x)) = f_1(U_{\beta \eps}(\pm 1)) = [-1, 1]$. Это приводит к разрыву второго рода $f_{k+1}$ в точке $x$. При этом $f_{k+1}(x) = f_1(f_k(x)) = f_1(\pm 1) = 0$, и свойство 1 снова выполнено.
			
			\item $x \in (a_{k,i}, b_{k,i})$. Тогда по доказанному $x \in h_{k,i}(A_{1,j}) \subset A_{k,i}$ для некоторых $i, j$. Если показать, что $f_{k+1}$ линейна на $h_{k,i}(A_{1,j})$, то $f_{k+1}$ окажется непрерывной в точке $x$. Воспользуемся формулой $f_{k+1} = f_1 \circ f_k$. В предположении, что $f_k$ линейна на $A_{k,i}$ и $f_k(A_{k,i}) = [-1, 1]$, можно почти однозначно записать формулу для $f_k$ на этом отрезке:
			\[
				f_k(x) = \pm (2t - 1), \ t = \frac{x - a_{k,i}}{b_{k,i} - a_{k,i}}.
			\]
			Зная это, можно вычислить $f_k(h_{k,i}(A_{1,j}))$: граница образа будет образом границы $A_{1,j}$. Посмотрим, во что переходят точки $a_{1,j}$ и $b_{1,j}$: 
			\begin{gather*}
				f_k(h_{k,i}(a_{1,j})) = \pm \ps{2\frac{h_{k,i}(a_{1,j}) - a_{k,i}}{b_{k,i} - a_{k,i}} - 1} = \pm \ps{2\frac{a_{1,j} + 1}{2} - 1} = \pm a_{1,j}, \\
				f_k(h_{k,i}(b_{1,j})) = \pm \ps{2\frac{h_{k,i}(b_{1,j}) - a_{k,i}}{b_{k,i} - a_{k,i}} - 1} = \pm \ps{2\frac{b_{1,j} + 1}{2} - 1} = \pm b_{1,j}.
			\end{gather*}
			Стало быть, $f_k(h_{k,i}(A_{1,j})) = \pm [a_{1,j}, b_{1,j}] = \pm A_{1,j}$, причём из 4-го пункта леммы $-A_{1,j}$ также является $A_{1,l}$ для некоторого $l$. Значит, $f_1$ линейна на $f_k(h_{k,i}(A_{1,j}))$, а $f_{k+1} = f_1 \circ f_k$ линейна на $h_i(A_{1,j})$, и $f_{k+1}(h_{k,i}(A_{1,j})) = f_1(\pm A_{1,j}) = [-1, 1]$.
		\end{itemize}
	Как показывает этот разбор случаев, $E_{k+1}$ действительно является множеством точек разрыва $f_{k+1}$, а на каждом отрезке $A_{k+1,s}$ $f_{k+1}$ линейна и принимает значения от $-1$ до $1$.
	
	Докажем последний, пятый пункт, используя рекуррентную формулу для $A_{k+1,s}$ и тот же пункт для $f_1$. При каждом $i$ имеем:
	\[
		\bigcup_{j=1}^\infty h_{i,k}(a_{1,j}) = h_{i,k}(\, \bigcup_{j=1}^\infty a_{1,j}) = h_{i,k}(\, \bigcup_{j=1}^\infty b_{1,j}) = \bigcup_{j=1}^\infty h_i(b_{1,j}),
	\]
	поэтому
	\[
		\bigcup_{s=1}^\infty \{a_{k+1,s}\} = \bigcup_{i=1}^\infty \bigcup_{j=1}^\infty h_{i,k}(a_{1,j}) = \bigcup_{i=1}^\infty \bigcup_{j=1}^\infty h_{i,k}(b_{1,j}) = \bigcup_{s=1}^\infty \{b_{k+1,s}\}.
	\]
	
	\end{proof}

	\begin{corollary} \label{integration}
		При всех $k \in \N$ $f_k$ интегрируема по Риману на $[-1, 1]$ и $\int_{-1}^1 f_k(t) dt = 0$.
	\end{corollary}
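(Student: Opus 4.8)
Оба утверждения планирую вывести напрямую из леммы~\ref{f_k_properties}, не прибегая к явному вычислению интеграла.

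Сначала докажу интегрируемость по критерию Лебега. Функция $f_k$ ограничена: по построению $f_1([-1,1]) \subseteq [-1,1]$, поэтому итерированная композиция $f_k = f_1 \circ \dots \circ f_1$ также принимает значения в $[-1,1]$, то есть $\md{f_k} \leq 1$. Остаётся показать, что множество её точек разрыва имеет меру нуль. По лемме~\ref{f_k_properties} это множество есть в точности $E_k$, а из пункта~3 следует, что $E_k$ счётно: база $E_1 = \{-1, 1\}$ конечна, а на шаге индукции $E_{k+1} = E_k \cup \bigcup_{i=1}^\infty \{a_{k,i}, b_{k,i}\}$ представляет собой счётное объединение счётных множеств. Счётное множество имеет лебегову меру нуль, поэтому по критерию Лебега $f_k \in \mathcal{R}[-1, 1]$.

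Равенство интеграла нулю проще всего получить из нечётности $f_k$. Нечётность докажу индукцией: $f_1$ нечётна по определению, а если $f_k$ нечётна, то, пользуясь формулой $f_{k+1} = f_1 \circ f_k$ и нечётностью $f_1$, имею $f_{k+1}(-x) = f_1(f_k(-x)) = f_1(-f_k(x)) = -f_1(f_k(x)) = -f_{k+1}(x)$. Далее заменой переменной $t \mapsto -t$ в интеграле по $\sbr{-1, 0}$ получаю $\int_{-1}^0 f_k(t)\,dt = -\int_0^1 f_k(t)\,dt$, откуда $\int_{-1}^1 f_k(t)\,dt = 0$; такая линейная замена законна для интегрируемой по Риману функции.

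Альтернативно, нулевое значение интеграла просматривается из кусочно-линейной структуры: на каждом $A_{k,i}$ функция $f_k$ линейно пробегает значения от $-1$ до $1$ (или наоборот), так что её среднее на $A_{k,i}$ равно нулю и $\int_{a_{k,i}}^{b_{k,i}} f_k = 0$; суммирование по семейству $\{A_{k,i}\}$, покрывающему $[-1,1]$ с точностью до множества меры нуль, снова даёт нуль. Однако этот путь требует счётной аддитивности интеграла по непересекающимся отрезкам, поэтому аргумент через нечётность предпочтительнее. Основным (хотя и вполне рутинным) местом считаю аккуратное обоснование счётности $E_k$ и законности замены переменной для разрывной $f_k$; оба факта стандартны при уже установленной интегрируемости.
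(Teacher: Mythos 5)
Ваше доказательство корректно и по существу совпадает с авторским: в статье также используется счётность $E_k$ (из пункта~3 леммы~\ref{f_k_properties}) вместе с критерием Лебега для интегрируемости и та же индукция по формуле $f_{k+1} = f_1 \circ f_k$ для нечётности, из которой следует равенство интеграла нулю. Добавленное вами альтернативное рассуждение через кусочную линейность в статье отсутствует, но оно и не требуется.
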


	\begin{proof}
		Из третьего пункта леммы тривиально следует, что $E_k$ не более чем счётно для любого $k$, и в силу критерия Лебега $f_k$ интегрируема по Риману на $[-1, 1]$. Кроме того, $f_k$ --- нечётная функция при всех $k$, что также непосредственно проверяется по индукции: $f_k(-x) = f_1(f_{k-1}(-x)) = f_1(-f_{k-1}(x)) = -f_1(f_{k-1}(x)) = -f_k(x)$. Интеграл от нечётной функции по отрезку $[-1, 1]$ равен нулю. Следствие доказано.
		
	\end{proof}

	Теперь мы готовы доказать заявленные свойства функции $f$. Начнём с того, что все $f_k$ ограничены по модулю единицей, откуда по теореме Вейерштрасса следует, что ряд $\sum_{k = 1}^\infty \frac{f_k(x)}{2^k}$ сходится равномерно. Тогда по предыдущему следствию и теореме об интегрируемости равномерно сходящегося ряда $f$ интегрируема по Риману на $[-1, 1]$.
	
	Чтобы установить наличие первообразной у $f$ на $[-1, 1]$, покажем, что её имеет $f_k$ при всех $k$, а следовательно, и $\frac{f_k}{2^k}$. Прибавлением нужной константы можно добиться того, что ряд из первообразных будет сходиться хотя бы в одной точке. Этот ряд будет удовлетворять всем условиям теоремы о почленном дифференцировании, поэтому его сумма будет первообразной функции $f$. Таким образом, достаточно доказать следующий факт:
	
	\begin{proposition}
		Функция $F_k(x) := \int_{-1}^x f_k(t) dt$ является первообразной функции $f_k$ для всех $k \in \N$.
	\end{proposition}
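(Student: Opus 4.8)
The plan is to reduce everything to the Fundamental Theorem of Calculus (Theorem~\ref{variable_bound_integral}) away from the discontinuity set $E_k$, and to handle the countably many points of $E_k$ by a direct estimate of the difference quotient. Since $f_k \in \mathcal{R}[-1,1]$ by Corollary~\ref{integration}, the function $F_k$ is well defined and continuous, and at every point $x \notin E_k$, where $f_k$ is continuous, Theorem~\ref{variable_bound_integral} already yields $F_k'(x) = f_k(x)$. So the entire problem is to show that $F_k$ is differentiable at each $x_0 \in E_k$ with $F_k'(x_0) = f_k(x_0) = 0$, the value being $0$ by property~1 of Lemma~\ref{f_k_properties}.

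The key arithmetic fact is that on every linearity segment $A_{k,i}$ the function $f_k$ is affine with range $[-1,1]$ (property~2), hence symmetric about the midpoint of $A_{k,i}$, so $\int_{A_{k,i}} f_k = 0$. Consequently, for $x$ near $x_0$ the integral $\int_{x_0}^x f_k$ collapses: every $A_{k,i}$ lying entirely between $x_0$ and $x$ contributes nothing, and the only surviving term comes from the single segment $A_{k,m} = [a_{k,m}, b_{k,m}]$ containing $x$. Thus $\md{\int_{x_0}^x f_k} = \md{\int_{a_{k,m}}^x f_k} \leq \md{A_{k,m}}$ (and if $x \in E_k$ there is no truncated segment, so the integral is exactly $0$). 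Dividing by $\md{x - x_0} \geq \operatorname{dist}(x_0, A_{k,m})$ gives
\[
	\frac{1}{\md{x - x_0}}\,\md{\int_{x_0}^x f_k} \leq \frac{\md{A_{k,m}}}{\operatorname{dist}(x_0, A_{k,m})} =: \rho(A_{k,m}),
\]
so it suffices to prove the geometric fact that $\rho(A_{k,m}) \to 0$ as the segment containing $x$ shrinks towards $x_0$.

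This ratio estimate is the main obstacle, and I would prove it by induction on $k$. The quantity $\rho$ is invariant under orientation-preserving affine maps, since such a map scales lengths and distances by the same factor; this is exactly what makes the self-similar description in property~4 of Lemma~\ref{f_k_properties} usable. For the base case $k=1$ a direct computation at $x_0 = \pm1$ gives, for the segment $\sbr{1 - \frac1n, 1 - \frac1{n+1}}$, the ratio $\frac1n \to 0$. For the step I would split $x_0 \in E_{k+1}$ into two cases. If $x_0$ is inherited, i.e.\ $x_0 \in E_k$, then each segment $A_{k+1,m}$ containing $x$ satisfies $A_{k+1,m} \subset A_{k,i}$ for some $A_{k,i}$ on the same side of $x_0$, whence $\md{A_{k+1,m}} \leq \md{A_{k,i}}$ and $\operatorname{dist}(x_0,A_{k+1,m}) \geq \operatorname{dist}(x_0,A_{k,i})$, so $\rho(A_{k+1,m}) \leq \rho(A_{k,i})$; as $x \to x_0$ the enclosing $A_{k,i}$ also shrinks to $x_0$, and the induction hypothesis closes this case. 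If $x_0$ is a new point, i.e.\ an endpoint $a_{k,i}=b_{k,j}$, then on each side $f_k$ is affine onto $[-1,1]$, and the segments accumulating at $x_0$ are precisely the affine images $h_{k,i}(A_{1,l})$ of the basic segments $A_{1,l}$ accumulating at $\pm1$; by affine invariance of $\rho$ together with the base case, their ratios tend to $0$.

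With this estimate in hand, the squeeze above shows that the difference quotient of $F_k$ at $x_0$ tends to $0$ from both sides — the left side following from the right by oddness of $f_k$, and at $x_0 = \pm1$ only the relevant one-sided limit being needed — so $F_k'(x_0) = 0 = f_k(x_0)$. Combined with the continuity points, this gives $F_k' = f_k$ on all of $[-1,1]$, which is the claim. The one point to verify carefully is that between $x_0$ and $x$ no segment other than the one containing $x$ is cut: this holds because $x_0 \in E_k$ is interior to no linearity segment, so the $A_{k,i}$ genuinely accumulate at $x_0$ rather than leaving a truncated piece there.
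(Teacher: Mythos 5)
Your argument is correct, but it takes a genuinely different route from the paper's. Both proofs share the same skeleton --- Theorem~\ref{variable_bound_integral} at continuity points, then a direct estimate of the difference quotient at $x_0\in E_k$ obtained by killing the integral over whole linearity segments and bounding the single truncated piece --- but the two key steps are implemented differently. For the cancellation, the paper uses the composition structure $f_k=f_{k-1}\circ f_1$ together with $\int_{-1}^{1}f_{k-1}=0$ (oddness, Corollary~\ref{integration}); you instead observe that $f_k$ is affine on each $A_{k,i}$ with range $[-1,1]$ and hence has zero mean there, which is more local and needs only property~2 of Lemma~\ref{f_k_properties}. For the remainder, the paper first reduces an arbitrary $x_0\in E_k$ to $x_0=\pm1$ by an explicit affine change of variables (writing $f_k=f_{k-l+1}\circ f_{l-1}$ with $f_{l-1}$ affine and non-constant on each side of $x_0$), after which the bound is the explicit $1/n$; you keep $x_0$ general and prove, by a separate induction, that $\md{A_{k,m}}/\operatorname{dist}(x_0,A_{k,m})\to0$, via monotonicity of this ratio under refinement in the inherited case and its affine invariance in the new-endpoint case. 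The trade-off: the paper's change of variables does the work of your ratio lemma in one stroke, while your version avoids manipulating compositions and inverse affine maps entirely. Two small points to tighten: (i) what you actually need is $x_0\notin A_{k,i}$ for \emph{every} $i$, not merely that $x_0$ is interior to no segment --- this is what guarantees both that no segment straddles $x_0$ and that $\operatorname{dist}(x_0,A_{k,m})>0$; it does hold, since by property~1 of Lemma~\ref{f_k_properties} every one-sided neighbourhood of a point of $E_k$ is mapped onto $[-1,1]$, which is impossible if that neighbourhood lies in a segment where $f_k$ is affine; (ii) the identity $\int_{x_0}^{x}f_k=\sum_i\int_{A_{k,i}\cap[x_0,x]}f_k$ involves countably many segments accumulating at $x_0$ and deserves the same justification the paper gives for its own decomposition, namely a limit of proper integrals over $[c_N,x]$ with $c_N\to x_0$ along segment endpoints, using the continuity of $F_k$.
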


	\begin{proof}
		Теорема об интеграле с переменным верхним пределом гласит, что $F_k'(x) = f_k(x)$ во всех точках $x$, в которых $f_k$ непрерывна. В остальных точках докажем равенство по определению, то есть установим наличие предела:
		\begin{align*}
			\lim_{x \to x_0} \frac{F_k(x) - F_k(x_0)}{x - x_0} = \lim_{x \to x_0} \frac{\int_{-1}^x f_k(t) dt - \int_{-1}^{x_0} f_k(t) dt}{x - x_0} = \lim_{x \to x_0} \frac{\int_{x_0}^x f_k(t) dt}{x - x_0} = f_k(x_0),
		\end{align*}
		где $x_0 \in E_k$. При этом $f_k(x_0) = 0$ по лемме \ref{f_k_properties}.
		
		Рассмотрим интеграл $\int_{x_0}^x f_k(t) dt$. Сначала сведём его к случаю $x_0 = 1$ или $-1$ (что неважно, так как $f_k$ нечётна). Поскольку $x_0 \in E_k$, можно выбрать минимальное $l \leq k$ такое, что $x_0 \in E_l$. Если $l = 1$, то уже $x_0 = \pm 1$. Если $l > 1$, то в силу выбора $l$ и пункта 5 леммы \ref{f_k_properties} $x_0 = a_{l-1,i} = b_{l-1, j}$ для некоторых $i$ и $j$. Другими словами, $f_{l-1}$ представляет собой линейную функцию по отдельности слева и справа от точки $x_0$, причём непостоянную. Это позволяет сделать замену переменной в интеграле для любого достаточно малого $x$:
		\[
			\int_{x_0}^x f_k(t) dt = \int_{x_0}^x f_{k-l+1}(f_{l-1}(t)) dt = \int_{x_0}^x f_{k-l+1}(at + b) dt = \frac{1}{a} \int_{f_{l-1}(x_0)}^{f_{l-1}(x)} f_{k-l+1}(s) ds.
		\]
		Здесь уже $f_{l-1}(x_0) = \pm 1$, а в силу нечётности $f_{k-l+1}$ достаточно рассмотреть только случай $f_{l-1}(x_0) = -1$. Выразим теперь приращение аргумента через $f_{l-1}$:
		\[
			x - x_0 = f_{l-1}^{-1}(f_{l-1}(x)) - f_{l-1}^{-1}(f_{l-1}(x_0)) = \frac{f_{l-1}(x) - b}{a} - \frac{f_{l-1}(x_0) - b}{a} = \frac{1}{a}(f_{l-1}(x) - f_{l-1}(x_0)).
		\]
		Так как $f_{l-1}(x) \to f_{l-1}(x_0)$ при $x \to x_0$, предел принимает следующий вид:
		\[
			\lim_{x \to x_0} \frac{\int_{x_0}^x f_k(t) dt}{x - x_0} = \lim_{x \to x_0} \frac{\int_{f_{l-1}(x_0)}^{f_{l-1}(x)} f_{k-l+1}(s) ds}{f_{l-1}(x) - f_{l-1}(x_0)} = \lim_{y \to -1} \frac{\int_{-1}^y f_{k-l+1}(s) ds}{y + 1}.
		\]
		Остаётся доказать случай $x_0 = -1$.
		
		Разобьём рассматриваемый интеграл на две части:
		\[
			\int_{-1}^x f_k(t) dt = \int_{-1}^{\frac{1}{n+1} - 1} f_k(t) dt + \int_{\frac{1}{n+1} - 1}^x f_k(t) dt
		\]
		где $x \in \left(\frac{1}{n+1} - 1, \frac{1}{n} - 1\right]$.
		Первое слагаемое можно понимать как несобственный интеграл и записать его следующим образом:
		\[
			\int_{-1}^{\frac{1}{n+1} - 1} f_k(t) dt = \lim_{N \to \infty} \int_{\frac{1}{N+1} - 1}^{\frac{1}{n+1} - 1} f_k(t) dt = \lim_{N \to \infty} \sum_{m = n+1}^{N} \int_{\frac{1}{m+1} - 1}^{\frac{1}{m} - 1} f_k(t) dt = \sum_{m = n+1}^\infty \int_{\frac{1}{m+1} - 1}^{\frac{1}{m} - 1} f_k(t) dt.
		\]
		Заметим, что $f_k(t) = f_{k-1}(f_1(t))$, а $f_1$ --- линейная функция на каждом отрезке вида $\sbr{\frac{1}{m+1} - 1, \frac{1}{m} - 1}$, переводящая его в $[-1, 1]$. Значит, по следствию \ref{integration}
		\[
			\int_{\frac{1}{m+1} - 1}^{\frac{1}{m} - 1} f_{k-1}(f_1(t)) dt = С \int_{-1}^1 f_{k-1}(s) ds = 0.
		\]
		Остаётся только второе слагаемое, и вся дробь уже оценивается непосредственно:
		\[
			\md{\frac{\int_{-1}^x f_k(t) dt}{x + 1}} < \frac{\int_{\frac{1}{n+1} - 1}^{\frac{1}{n} - 1} |f_k(t)| dt}{\frac{1}{n+1}} \leq (n+1) \ps{\frac{1}{n} - \frac{1}{n+1}} = \frac{1}{n}.
		\]
		Поскольку $\frac{1}{n} \to 0$ при $x \to -1$,
		\[
			\lim_{x \to -1} \frac{\int_{-1}^x f_k(t) dt}{x + 1} = 0.
		\]
		А значит, с учётом всего сказанного $F_k'(x) = f_k(x)$ при всех $x \in [-1, 1]$.
		
	\end{proof}
	
	Чтобы перейти к следующим свойствам функции $f$, нам понадобится ещё одно простое наблюдение:
	
	\begin{corollary} (из леммы \ref{f_k_properties}) \label{interval_size}
		При всех $k, i$ $|A_{k,i}| \leq \frac{1}{2^{k-1}}$ и $\forall l < k \ \exists j: A_{k,i} \subset A_{l,j}$.
	\end{corollary}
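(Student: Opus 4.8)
The plan is to argue by induction on $k$, relying entirely on points~4 and~5 of Lemma~\ref{f_k_properties}, which describe the intervals $A_{k,i}$ recursively through the affine maps $h_{k,i}$. The whole statement reduces to understanding how these maps act on lengths and inclusions.

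First I would settle the base case $k=1$. From the explicit list of $\{A_{1,s}\}$ in point~4, the central interval $\sbr{-\frac12, \frac12}$ has length $1$, while each interval $\sbr{\frac{1}{n+1}-1, \frac1n-1}$ (and its mirror image near $1$) has length $\frac{1}{n(n+1)} \leq \frac16$. Hence $\md{A_{1,i}} \leq 1 = \frac{1}{2^{0}}$ for every $i$, which is the required bound at level $k=1$; the inclusion part is vacuous there since there is no $l < 1$.

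For the inductive step I would use that, by point~4, every $A_{k+1,s}$ equals $h_{k,i}(A_{1,j})$ for suitable $i,j$, where $h_{k,i}$ is the affine bijection $[-1,1] \to [a_{k,i}, b_{k,i}] = A_{k,i}$ of slope $\frac{b_{k,i}-a_{k,i}}{2} = \frac{\md{A_{k,i}}}{2}$. The inclusion is then immediate: since $A_{1,j} \subset [-1,1]$, we get $A_{k+1,s} = h_{k,i}(A_{1,j}) \subset h_{k,i}([-1,1]) = A_{k,i}$, so each level-$(k+1)$ interval sits inside a level-$k$ interval, and composing these inclusions across levels produces, for every $l < k$, an index $j$ with $A_{k,i} \subset A_{l,j}$. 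For the length bound, an affine map scales intervals by its slope, whence
\[
    \md{A_{k+1,s}} = \frac{\md{A_{k,i}}}{2}\,\md{A_{1,j}} \leq \frac{\md{A_{k,i}}}{2},
\]
using $\md{A_{1,j}} \leq 1$ from the base case. Combining with the inductive hypothesis $\md{A_{k,i}} \leq \frac{1}{2^{k-1}}$ gives $\md{A_{k+1,s}} \leq \frac{1}{2^{k}}$, closing the induction.

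No step here is a genuine obstacle; the only point demanding care is the bookkeeping of indices — one must keep the $i$ attached to a given $A_{k+1,s}$ synchronized with the map $h_{k,i}$ governing it, so that the inclusions chain correctly through successive levels and the slope used in the length estimate really is $\frac{\md{A_{k,i}}}{2}$.
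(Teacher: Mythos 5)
Your proof is correct and follows essentially the same route as the paper: induction on $k$ via point~4 of Lemma~\ref{f_k_properties}, with the length bound coming from the scaling factor $\frac{|A_{k,i}|}{2}$ of the affine map $h_{k,i}$ and the inclusion from $h_{k,i}(A_{1,j}) \subset h_{k,i}([-1,1]) = A_{k,i}$ chained with the inductive hypothesis. No substantive differences.
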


	\begin{proof}
		Проверим по индукции, используя 4-ю часть леммы. 	При $k=1$ имеется отрезок $A_{1,i} = \sbr{-\frac{1}{2}, \frac{1}{2}}$, его длина равна $\frac{1}{2^{1-1}}$. При других $i$ длина отрезка $A_{1,i}$ равна $\ps{1 - \frac{1}{n+1}} - \ps{1 - \frac{1}{n}}$, что меньше единицы. Для шага индукции воспользуемся непосредственно формулой для $A_{k+1,s}$:
		\[
			|A_{k+1,s}| = |h_{k,i}(A_{1,j})| = \md{(b_{k,i} - a_{k,i}) \cdot \frac{A_{1,j} + 1}{2} + a_{k,i}} = |A_{k,i}| \cdot \frac{|A_{1,j}|}{2} \leq \frac{1}{2^k}.
		\]
		Для тех же $s, i$ $A_{k+1,s} \subset A_{k,i}$, а по предположению $\forall l < k \ \exists j : A_{k,i} \subset A_{l,j}$.
		
	\end{proof}

	\begin{proposition} \label{f_discontinuities}
		$\forall x_0 \in \bigcup_{k=1}^\infty E_k \ \varliminf_{x \to x_0} f(x) < f(x_0) < \varlimsup_{x \to x_0} f(x)$.
	\end{proposition}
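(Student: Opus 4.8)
The plan is to isolate the single discontinuous term that controls the oscillation of $f$ near $x_0$ and reduce the claim to a statement about the tail of the defining series. First I would fix $x_0 \in \bigcup_{k=1}^\infty E_k$ and, using the inclusion $E_k \subset E_{k+1}$ coming from part~3 of Lemma~\ref{f_k_properties}, let $k_0$ be the least index with $x_0 \in E_{k_0}$. For every $k \geq k_0$ part~1 of the lemma gives $f_k(x_0) = 0$, so $f(x_0) = \sum_{k=1}^{k_0-1} f_k(x_0)/2^k =: S$; for $k < k_0$ minimality of $k_0$ means $x_0 \notin E_k$, so each such $f_k$ is continuous at $x_0$ and the finite sum $\Phi(x) := \sum_{k=1}^{k_0-1} f_k(x)/2^k$ is continuous at $x_0$ with $\Phi(x_0) = S$. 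Writing $g(x) := \sum_{k=k_0}^\infty f_k(x)/2^k$ and using $f = \Phi + g$ with $\Phi(x) \to S$, I get $\varliminf_{x\to x_0} f = S + \varliminf_{x\to x_0} g$ and $\varlimsup_{x\to x_0} f = S + \varlimsup_{x\to x_0} g$. Since $S = f(x_0)$, it therefore suffices to prove $\varliminf_{x\to x_0} g < 0 < \varlimsup_{x\to x_0} g$.

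Next I would approach $x_0$ through points at which the leading term $f_{k_0}/2^{k_0}$ attains its extreme values $\pm 2^{-k_0}$ while the whole tail $\sum_{k>k_0} f_k/2^k$ vanishes exactly. By part~1 of Lemma~\ref{f_k_properties}, $f_{k_0}$ assumes every value of $[-1,1]$ in each one-sided neighborhood of $x_0$ for which such a neighborhood exists (a single side suffices, so the case $x_0 = \pm 1$ is not special). Being linear on each segment $A_{k_0,i}$ with range $[-1,1]$, the function $f_{k_0}$ attains the value $+1$ only at endpoints $a_{k_0,i}$ or $b_{k_0,i}$, and likewise $-1$; since it oscillates fully in every neighborhood of $x_0$, I obtain endpoints $p_n \to x_0$ with $f_{k_0}(p_n) = +1$ and endpoints $q_n \to x_0$ with $f_{k_0}(q_n) = -1$. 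By part~3 of the lemma all these endpoints lie in $E_{k_0+1} \subset E_k$ for every $k > k_0$, so $f_k(p_n) = f_k(q_n) = 0$ for all $k > k_0$.

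Consequently $g(p_n) = f_{k_0}(p_n)/2^{k_0} = 2^{-k_0}$ and $g(q_n) = -2^{-k_0}$ for every $n$, whence $\varlimsup_{x\to x_0} g \geq 2^{-k_0} > 0$ and $\varliminf_{x\to x_0} g \leq -2^{-k_0} < 0$, which together with the reduction above gives the desired strict inequalities. The one genuinely delicate point, which I expect to be the main obstacle, is that the bounded tail $\sum_{k>k_0} f_k/2^k$ has the same supremum norm $2^{-k_0}$ as the leading term and could a priori cancel it completely, so approaching $x_0$ through generic points where $f_{k_0} = \pm 1$ is not enough. The device that removes this obstacle is to choose the approximating points inside $E_{k_0+1}$, which forces every tail term to vanish \emph{exactly} rather than merely to be small, leaving a clean value of $\pm 2^{-k_0}$.
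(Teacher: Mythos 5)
Your proof is correct and follows essentially the same route as the paper's: split off the continuous partial sum $S_{k_0-1}$, and approach $x_0$ through endpoints of the segments $A_{k_0,i}$ where $f_{k_0}=\pm 1$, which lie in $E_{k_0+1}$ and hence kill every tail term exactly. The only cosmetic difference is that you use additivity of $\varliminf$/$\varlimsup$ with the convergent term $\Phi$, while the paper bounds $|S_{k_0-1}(x)-S_{k_0-1}(x_0)|$ by $2^{-(k_0+1)}$ in a small neighborhood; the substance is identical.
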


	Это утверждение показывает, что множество точек разрыва $f$ содержит $\bigcup_{k=1}^\infty E_k$. По теореме о непрерывности равномерно сходящегося ряда верно и обратное включение, но нам оно не понадобится.

	\begin{proof}
		Пусть $x_0 \in \bigcup_{k=1}^\infty E_k$. Выберем минимальное $k$, при котором $x_0 \in E_k$. Тогда $S_{k-1}$ --- $(k-1)$-я частичная сумма $f$ --- непрерывна в $x_0$ (в случае $k = 1$ $S_{k-1} \equiv 0$), а $f_k$ по лемме \ref{f_k_properties} принимает значения от -1 до $1$ в любой окрестности точки $x_0$. Возьмём достаточно малую для наших нужд окрестность:
		\[
			\exists \delta_0 > 0 \ \forall x \in U_{\delta_0}(x_0) \ |S_{k-1}(x) - S_{k-1}(x_0)| < \frac{1}{2^{k+1}}.
		\]
		Теперь для любого заданного $\delta \in (0, \delta_0)$ можно найти точки в $U_\delta(x_0)$, в которых $f_k$ достигает значений $\pm 1$:
		\[
			\exists x_1, x_2 \in U_\delta(x_0) : f_k(x_1) = 1, \ f_k(x_2) = -1.
		\]
		Эти точки являются концами некоторых отрезков $A_{k,i}$, $A_{k,j}$, поэтому по пунктам 1 и 3 леммы \ref{f_k_properties} $\forall l > k$ $f_l(x_0) = f_l(x_1) = f_l(x_2) = 0$, а также $f_k(x_0) = 0$. Отсюда неравенство:
		\[
			f(x_1) = S_k(x_1) = S_{k-1}(x_1) + \frac{f_k(x_1)}{2^k} > \ps{S_{k-1}(x_0) - \frac{1}{2^{k+1}}} + \frac{1}{2^k} = f(x_0) + \frac{1}{2^{k+1}},
		\]
		и аналогично
		\[
			f(x_2) < \ps{S_{k-1}(x_0) + \frac{1}{2^{k+1}}} - \frac{1}{2^k} = f(x_0) - \frac{1}{2^{k+1}}.
		\]
		Но это означает, что
		\[
			\varliminf_{x \to x_0} f(x) \leq f(x_0) - \frac{1}{2^{k+1}} < f(x_0) < f(x_0) + \frac{1}{2^{k+1}} \leq \varlimsup_{x \to x_0} f(x).
		\]
		
	\end{proof}
	
	\begin{proposition} \label{no_extrema}
		$f$ не имеет локальных экстремумов ни в какой точке $x_0 \in [-1, 1]$.
	\end{proposition}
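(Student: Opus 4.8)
The plan is to reduce the claim to the following statement: for every $x_0 \in [-1,1]$ and every $\eps > 0$ there exist points $x_+, x_- \in (x_0 - \eps, x_0 + \eps)$ with $f(x_-) < f(x_0) < f(x_+)$; this immediately excludes a local extremum. When $x_0 \in \bigcup_k E_k$ the statement is already contained in Proposition \ref{f_discontinuities}, since $\varliminf_{x \to x_0} f(x) < f(x_0) < \varlimsup_{x \to x_0} f(x)$ forces values of $f$ both above and below $f(x_0)$ arbitrarily close to $x_0$. So the whole difficulty is concentrated in the remaining points $x_0 \notin \bigcup_k E_k$, where $f$ turns out to be continuous and the oscillation has vanishing amplitude.

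For such $x_0$ I would exploit the self-similar structure. By Corollary \ref{interval_size} the point $x_0$ lies in a nested family of segments $A_{k,i_k}$ with $|A_{k,i_k}| \le 2^{1-k} \to 0$, on each of which every $f_l$ with $l \le k$ is linear, so the partial sum $S_k = \sum_{l=1}^{k} f_l/2^l$ is affine there; denote its slope by $\sigma_k$. Using $f_{k+m} = f_m \circ f_k$ one obtains on $A_{k,i_k}$ the identity $f(x) = S_k(x) + 2^{-k} f(f_k(x))$, where $f_k$ maps $A_{k,i_k}$ affinely and bijectively onto $[-1,1]$; in particular $f(x_0) = S_k(x_0) + 2^{-k} f(y_k)$ with $y_k := f_k(x_0)$ and $|f(y_k)| \le 1$.

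Next I would compare $f(x_0)$ with the values of $f$ at the endpoints of the tooth $A_{k+1} = [a_k,b_k] \ni x_0$ and of its two neighbouring teeth inside $A_{k,i_k}$, namely at $a_k' < a_k < x_0 < b_k < b_k'$. All four endpoints lie in $E_{k+2}$, so by Lemma \ref{f_k_properties} every $f_j$ with $j \ge k+2$ vanishes there while $f_{k+1} = \pm 1$; hence these four values are exactly $S_k(\cdot) \pm 2^{-(k+1)}$, the signs alternating because adjacent teeth carry opposite orientations of the zig-zag $f_1$. A short computation, using that $S_k$ is affine with slope $\sigma_k$, then reveals two mechanisms producing the required neighbours: if $|f(y_k)| < 1/2$, the $\pm 2^{-(k+1)}$ boosts at $a_k$ and $b_k$ already straddle $f(x_0)$; and if $|\sigma_k|$ is large, the monotone trend of $S_k$ across the tooth outweighs the $2^{-k}$-sized residual $2^{-k}f(f_k(x))$, again placing points above and below $f(x_0)$.

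The key quantitative input guaranteeing that one of these mechanisms is always available is that $|f_1'| \ge 2$ on every linearity interval of $f_1$, whence $|\sigma_k - \sigma_{k-1}| = 2^{-k}|f_k'(x_0)| = 2^{-k}\prod_{j<k}|f_1'(y_j)| \ge 1$; consecutive slopes differ by at least $1$, so $|\sigma_k| \ge 1/2$ for infinitely many $k$, i.e. the trend mechanism recurs at arbitrarily deep levels. The hard part, which I expect to be the main obstacle, is to merge the boost and trend mechanisms into a single uniform argument: the delicate configuration is when $x_0$ sits very close to an endpoint of its tooth (so the trend has little room to act) while simultaneously $f(y_k)$ is close to $\pm 1$ (so the boosts are too weak). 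I would resolve this by descending to a deeper level, using that $x_0$ is strictly interior to its tooth at every level and that infinitely many levels satisfy $|\sigma_k| \ge 1/2$, so that at a suitably chosen level there is both enough room for the trend and the correct sign to force a neighbour of $x_0$ on each side of the value $f(x_0)$.
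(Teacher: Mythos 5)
Your reduction to the case $x_0 \notin \bigcup_k E_k$ via Proposition~\ref{f_discontinuities}, the nested teeth $A_{k,i_k}$, the identity $f = S_k + 2^{-k}\,f\circ f_k$ on $A_{k,i_k}$, and the observation that $|\sigma_k - \sigma_{k-1}| \geq 1$ (so $S_k$ is strictly monotone on $A_{k,i_k}$ for at least one of any two consecutive $k$) are all correct and match the paper's setup. But the core of the hard case is missing, and you say so yourself: you never prove that the ``boost'' and ``trend'' mechanisms jointly cover every configuration, and the proposed fix (``descend to a deeper level'') is only a statement of intent. The delicate configuration you describe --- $x_0$ sitting near an endpoint of its tooth while $f(f_k(x_0))$ is near $\pm 1$ --- is not obviously excluded at any level, and nothing in your argument rules out that it persists along the whole subsequence of levels with $|\sigma_k| \geq 1/2$. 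As written, the argument does not close.

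The paper sidesteps the dichotomy entirely with one extra idea you are missing: instead of comparing $f(x_0)$ with values at tooth endpoints (where only $f_{k+1} = \pm 1$ is controlled, and the tail $2^{-k} f(f_k(x_0))$, of size up to $2^{-k}$, can swamp the $2^{-(k+1)}$ boost), choose comparison points at which the \emph{entire tail} of the series matches its value at $x_0$. Since $a_{k,i}, b_{k,i} \in E_{k+1}$, Lemma~\ref{f_k_properties} gives $f_{k+1}([a_{k,i}, x_0)) = f_{k+1}((x_0, b_{k,i}]) = [-1,1]$, so one can pick $x_1 < x_0 < x_2$ inside $A_{k,i}$ with $f_{k+1}(x_1) = f_{k+1}(x_2) = f_{k+1}(x_0)$; then $f_l(x_j) = f_{l-k-1}(f_{k+1}(x_j)) = f_l(x_0)$ for every $l > k$, the tails cancel exactly, and $f(x_j) - f(x_0) = S_k(x_j) - S_k(x_0)$ takes opposite signs at $x_1$ and $x_2$ because $S_k$ is strictly monotone and affine on $A_{k,i}$. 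This single choice replaces your entire boost-versus-trend case analysis and closes the proof; I recommend you adopt it.
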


	\begin{proof}
		Если $x_0 \in E_k$ для некоторого $k$, то ввиду предыдущего утверждения $x_0$ не может являться локальным экстремумом. Рассмотрим оставшийся случай.
		
		Зафиксируем любую окрестность $U_\delta(x_0)$. Сразу заметим, что $x_0 \neq \pm 1$, поэтому эта окрестность двусторонняя и $x_0$ --- её внутренняя точка. Раз $x_0 \notin E_k$ ни для какого $k$, то $x_0 \in (a_{k,i}, b_{k,i})$ для всех $k$ ($i$ подразумевается зависящим от $k$). Тогда по следствию \ref{interval_size} для достаточно большого $k$ $\exists i: x_0 \in (a_{k,i}, b_{k,i}) \subset [a_{k,i}, b_{k,i}] \subset U_\delta(x_0)$. По тому же следствию $\forall l < k \ \exists j : A_{k,i} \subset A_{l,j}$. Это говорит о том, что $S_k$ есть линейная функция на $A_{k,i}$, но, возможно, постоянная. Если $S_k$ оказалась постоянной, то увеличим $k$ на единицу --- $S_{k+1}$ таковой уже не будет, и по-прежнему $x_0 \in (a_{k+1,j}, b_{k+1,j}) \subset (a_{k,i}, b_{k,i}) \subset U_\delta(x_0)$. Это даёт право считать, что $S_k$ строго монотонна на $A_{k,i}$.
		
		По лемме \ref{f_k_properties} $a_{k,i}, b_{k,i} \in E_{k+1}$ и $f_{k+1}([a_{k,i}, x_0)) = f_{k+1}((x_0, b_{k,i}]) = [-1, 1]$. В частности,
		\begin{gather*}
			\exists x_1 \in [a_{x,i}, x_0) \colon f_{k+1}(x_1) = f_{k+1}(x_0), \\
			\exists x_2 \in (x_0, b_{x,i}] \colon 	f_{k+1}(x_2) = f_{k+1}(x_0).
		\end{gather*}
		Но то же равенство верно и при любом $l > k + 1$, поскольку 
		\[
			f_l(x_i) = f_{l-k-1}(f_{k+1}(x_i)) = f_{l-k-1}(f_{k+1}(x_0)) = f_l(x_0), \ i = 1, 2.
		\]
		Точки $x_1$ и $x_2$ свидетельствуют о том, что $x_0$ не является точкой экстремума в $U_\delta(x_0)$:
		\begin{gather*}
			(f(x_1) - f(x_0))(f(x_2) - f(x_0)) = (S_k(x_1) - S_k(x_0))(S_k(x_2) - S_k(x_0)) < 0.
		\end{gather*}
		Поскольку это верно в любой окрестности $U_\delta(x_0)$, утверждение доказано.
	
	\end{proof}

	Осталось последнее:
	\begin{proposition}
		$f$ нигде не монотонна на $[-1, 1]$.
	\end{proposition}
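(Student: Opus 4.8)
The plan is to show that $f$ fails to be monotone on an arbitrary subinterval $(a,b)\subset[-1,1]$ by producing, to the \emph{left} of a suitable discontinuity point $x_0$, values of $f$ that are both strictly above and strictly below $f(x_0)$. A point to the left with larger value rules out $f$ being non-decreasing, and a point to the left with smaller value rules out $f$ being non-increasing, so together they rule out monotonicity. The engine is a one-sided sharpening of Proposition~\ref{f_discontinuities}: whereas that statement was phrased with two-sided neighborhoods, part~1 of Lemma~\ref{f_k_properties} in fact gives $f_k(U_{-\eps}(x_0))=[-1,1]$ for every $\eps>0$ at any $x_0\in E_k$ with $x_0\neq-1$, so all of the oscillation needed can be located on a single side.

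First I would record that $\bigcup_{k=1}^\infty E_k$ is dense: by Corollary~\ref{interval_size} the lengths satisfy $|A_{k,i}|\le 2^{-(k-1)}\to 0$ while the intervals are nested and cover $[-1,1]$ up to $\bigcup_k E_k$, so for $k$ large some $A_{k,i}\subset(a,b)$, and then its endpoints lie in $E_{k+1}\cap(a,b)$. Fix such a discontinuity point $x_0\in(a,b)$ and let $k$ be minimal with $x_0\in E_k$; since $x_0$ is interior, $x_0\neq\pm1$. Because $x_0\notin E_j$ for $j<k$, the partial sum $S_{k-1}$ is continuous at $x_0$, and since $x_0\in E_k\subseteq E_l$ for all $l\ge k$ we have $f_l(x_0)=0$ there, whence $f(x_0)=S_{k-1}(x_0)$.

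Next, exactly as in Proposition~\ref{f_discontinuities}, choose $\delta_0$ with $|S_{k-1}(x)-S_{k-1}(x_0)|<2^{-(k+1)}$ whenever $|x-x_0|<\delta_0$. On each interval $A_{k,i}$ the function $f_k$ is linear onto $[-1,1]$, so its values $\pm1$ are attained at endpoints of the $A_{k,i}$, and these endpoints lie in $E_{k+1}$, so every $f_l$ with $l>k$ vanishes there. By the one-sided surjectivity above, such endpoints accumulate at $x_0$ from the left. Choosing an endpoint $z<x_0$ with $|z-x_0|<\delta_0$ and $f_k(z)=1$ gives $f(z)=S_{k-1}(z)+2^{-k}>f(x_0)+2^{-(k+1)}>f(x_0)$; choosing instead $z'<x_0$ with $f_k(z')=-1$ gives $f(z')<f(x_0)-2^{-(k+1)}<f(x_0)$. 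The former contradicts $f$ being non-decreasing on $(a,b)$, the latter contradicts $f$ being non-increasing, so $f$ is not monotone on $(a,b)$; as $(a,b)$ was arbitrary, $f$ is nowhere monotone.

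The only genuinely new ingredient beyond the already-proved propositions is the remark that the construction of Proposition~\ref{f_discontinuities} can be carried out inside a \emph{one-sided} neighborhood, which is precisely where Lemma~\ref{f_k_properties}(1) does the work; the rest is bookkeeping with the nested interval structure and the same elementary estimate. I anticipate no serious obstacle, the main point requiring care being to take $z,z'$ to be actual endpoints of the intervals $A_{k,i}$ (so that all higher $f_l$ vanish and $f(z)$ is computed cleanly) rather than arbitrary points at which $f_k=\pm1$.
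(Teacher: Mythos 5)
Your proof is correct, but it takes a genuinely different route from the paper's. The paper argues through regularity: a monotone function can only have discontinuities of the first kind, while a derivative has none, so $f$ would have to be continuous on $(a,b)$; this is then contradicted by combining Corollary~\ref{interval_size} (every interval of length greater than $2^{-(k-1)}$ meets $E_{k+1}$) with Proposition~\ref{f_discontinuities}. You instead bypass the fact that $f$ is a derivative altogether and refute monotonicity directly: you locate a discontinuity point $x_0\in(a,b)$ and produce, strictly to the left of $x_0$, endpoints $z,z'$ of intervals $A_{k,i}$ with $f(z)>f(x_0)>f(z')$, using the one-sided strengthening of Proposition~\ref{f_discontinuities} that part~1 of Lemma~\ref{f_k_properties} indeed supplies ($f_k(U_{-\eps}(x_0))=[-1,1]$ for every $\eps>0$ when $x_0\neq-1$), and your insistence that $z,z'$ be endpoints of the $A_{k,i}$ correctly guarantees $f_l(z)=f_l(z')=0$ for all $l>k$, so the values $f(z),f(z')$ are computed exactly as in that proposition. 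What your version buys is self-containedness: it does not invoke the fact, stated without proof in the paper, that derivatives have no discontinuities of the first kind, and it never uses that $f$ has a primitive. The price is that you essentially redo the estimate of Proposition~\ref{f_discontinuities} in one-sided form instead of citing it. The only bookkeeping point to add is that $\delta_0$ must also be taken small enough that $(x_0-\delta_0,x_0]\subset(a,b)$, so that $z$ and $z'$ actually lie in the interval on which monotonicity is assumed; with that, the argument is complete.
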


	\begin{proof}
		Предположим противное: найдётся интервал $(a, b) \subset [-1, 1]$, на котором $f$ монотонна. С одной стороны, монотонная функция может иметь точки разрыва только первого рода. С другой стороны, вспомним, что $f$ является производной, а у производной не бывает точек разрыва первого рода. Значит, $f$ должна быть непрерывной на $(a, b)$.
		
		Теперь возьмём такое $k$, что $\frac{1}{2^{k-1}} < b - a$. Тогда по следствию \ref{interval_size} $(a, b)$ пересекается либо с $\bigcup_{i=1}^\infty \{a_{k,i}, b_{k,i}\}$, либо с $E_k$, то есть пересекается с $E_{k+1}$. По утверждению \ref{f_discontinuities} это означает, что $(a, b)$ содержит точку разрыва $f$. Противоречие.
		
	\end{proof}

	Таким образом, построенный пример доказывает теорему \ref{main_prop}.


\section{Доказательство теоремы \ref{inflection}}

Рассмотрим функцию $g(x) = f(x) \cdot \sign(x)$, где $f$ --- функция из предыдущего примера. Первообразной функции $g$ на $[-1, 1]$ будет $G(x) = F(x) \cdot \sign(x)$, где $F$ --- первообразная функции $f$ такая, что $F(0) = 0$. Утверждение состоит в том, что $0$ является точкой строгого правостороннего локального минимума функции $f$. Если это так, то $G$ будет удовлетворять всем требованиям теоремы \ref{inflection}:
\begin{itemize}
	\item $g$, как и $f$, интегрируема по Риману;
	\item При $x \geq 0$ $g(x) = f(x)$, а при $x \leq 0$ $g(x) = -f(x) = f(-x)$, поэтому 0 --- единственная точка двустороннего локального экстремума функции $g$;
	\item если $f$ нигде не монотонна, то и $g$ тоже. Тогда по критерию выпуклости $G$ не является ни выпуклой, ни вогнутой ни на каком интервале и, следовательно, не имеет точек перегиба.
\end{itemize}

Осталось доказать, что $f(x) > f(0)$ в некоторой правосторонней окрестности $f$. Для этого проанализируем соответствующие интервалы линейности функций $f_k$:
\begin{proposition} \label{middle_intervals}
	При всех $k$ существует отрезок $A_{k,i}$ из леммы \ref{f_k_properties}, равный $\sbr{-\frac{1}{2^k}, \frac{1}{2^k}}$, и $f_k$ возрастает на $A_{k,i}$.
\end{proposition}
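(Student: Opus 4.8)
The plan is to prove the statement by induction on $k$, slightly strengthening the hypothesis: at each level I will show not merely that $\sbr{-\frac{1}{2^k}, \frac{1}{2^k}}$ is one of the segments $A_{k,i}$ with $f_k$ increasing on it, but that in fact $f_k(x) = 2^k x$ there. This explicit affine form is the right invariant to carry, because by part 2 of Lemma \ref{f_k_properties} the function $f_k$ is linear on $A_{k,i}$ and maps it onto $[-1, 1]$; once we also know it is increasing, it must be the unique order-preserving affine bijection, namely $f_k(x) = 2^k x$ on $\sbr{-\frac{1}{2^k}, \frac{1}{2^k}}$. Keeping the slope explicit, rather than the bare word ``increasing'', is what will let me control orientation through a composition in the inductive step.

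For the base case $k = 1$ I would read the claim straight off the definition of $f_1$: on $\lsi{0, \frac{1}{2}}$ we have $n = 1$, so $x = \frac{t}{2}$ and $f_1(x) = t = 2x$; by oddness $f_1(x) = 2x$ on all of $\sbr{-\frac{1}{2}, \frac{1}{2}}$, which is increasing, and part 4 of Lemma \ref{f_k_properties} lists $\sbr{-\frac{1}{2}, \frac{1}{2}}$ among the $A_{1,s}$. For the inductive step I would invoke the recursion $\{A_{k+1,s} \such s \in \N\} = \{h_{k,i}(A_{1,j}) \such i, j \in \N\}$ from the same part. Taking $A_{k,i} = \sbr{-\frac{1}{2^k}, \frac{1}{2^k}}$ (the middle segment supplied by the hypothesis) and $A_{1,j} = \sbr{-\frac{1}{2}, \frac{1}{2}}$, a short computation with $a_{k,i} = -\frac{1}{2^k}$ and $b_{k,i} = \frac{1}{2^k}$ collapses $h_{k,i}$ to $h_{k,i}(x) = \frac{x}{2^k}$, whence $h_{k,i}\ps{\sbr{-\frac{1}{2}, \frac{1}{2}}} = \sbr{-\frac{1}{2^{k+1}}, \frac{1}{2^{k+1}}}$. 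Thus this interval is genuinely one of the maximal linearity segments $A_{k+1,s}$.

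Monotonicity then follows by factoring $f_{k+1} = f_1 \circ f_k$: on $\sbr{-\frac{1}{2^{k+1}}, \frac{1}{2^{k+1}}} \subset A_{k,i}$ the inductive form $f_k(x) = 2^k x$ sends this subinterval onto $\sbr{-\frac{1}{2}, \frac{1}{2}}$ in an order-preserving way, and on $\sbr{-\frac{1}{2}, \frac{1}{2}}$ the base case gives $f_1$ increasing, so the composition is increasing, with $f_{k+1}(x) = 2^{k+1} x$, which closes the induction. I expect the main obstacle to be not the monotonicity itself but the bookkeeping point that $\sbr{-\frac{1}{2^{k+1}}, \frac{1}{2^{k+1}}}$ is one of the $A_{k+1,s}$ in the sense of Lemma \ref{f_k_properties} (a maximal linearity segment), rather than merely some interval on which $f_{k+1}$ happens to be linear; this is exactly what the recursion of part 4 together with the identity $h_{k,i}(x) = \frac{x}{2^k}$ certifies, so quoting the lemma rather than arguing ad hoc is essential. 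A secondary but genuine concern is keeping the orientation straight across the composition, which is why I prefer to propagate the precise slope $2^k x$ at every step.
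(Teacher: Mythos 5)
Your proposal is correct and follows essentially the same route as the paper: induction on $k$ using part 4 of Lemma \ref{f_k_properties}, the computation $h_{k,i}(x)=\frac{x}{2^k}$ for the middle segment, and the factorization $f_{k+1}=f_1\circ f_k$. The only (harmless) difference is that you propagate the explicit slope $f_k(x)=2^kx$, whereas the paper tracks orientation through the endpoint value $f_k(a_{k,i})=-1$; the two bookkeeping devices are equivalent here.
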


\begin{proof}
	Ещё раз прибегнем к индукции, основываясь на 4-й части леммы \ref{f_k_properties}. База уже доказана. Переход: если $A_{k,i} = \sbr{-\frac{1}{2^k}, \frac{1}{2^k}}$, $A_{1,j} = \sbr{-\frac{1}{2}, \frac{1}{2}}$, то
	\[
		A_{k+1,s} = h_{k,i}(A_1,j) = \frac{1}{2^{k-1}} \cdot \frac{A_{1,j} + 1}{2} - \frac{1}{2^k} = \sbr{-\frac{1}{2^{k+1}}, \frac{1}{2^{k+1}}}.
	\]
	Возрастание $f_k$ на $A_{k,i}$ эквивалентно тому, что $f_k(a_{k,i}) = -1$. Тогда $f_k(b_{k,i}) = 1$, и по линейности
	\[
		f_k(a_{k+1,s}) = f_k\ps{-\frac{1}{2^{k+1}}} = -\frac{1}{2} = a_{1,j}.
	\]
	Отсюда следует и возрастание $f_{k+1}$ на $A_{k+1,s}$:
	\[
		f_{k+1}(a_{k+1,s}) = f_1(f_k(a_{k+1,s})) = f_1(a_{1,j}) = -1.
	\]
	
\end{proof}

Теперь с помощью этого знания можно оценить функцию $f$ в проколотой окрестности нуля:
\begin{proposition} \label{local_min}
	При всех $x \in \ps{0, \frac{1}{4}}$ $f(x) > 0$.
\end{proposition}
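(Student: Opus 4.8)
The plan is to exploit Proposition \ref{middle_intervals}, which tells us that on the central interval $A_{k,i} = \sbr{-\frac{1}{2^k}, \frac{1}{2^k}}$ the function $f_k$ is linear and increasing. First I would pin down the exact formula on this interval: since $f_k$ is increasing and linear there and $f_k\ps{A_{k,i}} = [-1, 1]$ by part 2 of Lemma \ref{f_k_properties}, we must have $f_k\ps{-\frac{1}{2^k}} = -1$ and $f_k\ps{\frac{1}{2^k}} = 1$, whence $f_k(x) = 2^k x$ for every $x \in \sbr{0, \frac{1}{2^k}}$. The crucial consequence is that $\frac{f_k(x)}{2^k} = x$ on this range, so each such term contributes \emph{exactly} $x$ to the series defining $f$.

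Next, fix $x \in \ps{0, \frac{1}{4}}$ and let $m = m(x)$ be the largest integer with $x \leq \frac{1}{2^m}$; equivalently $\frac{1}{2^{m+1}} < x \leq \frac{1}{2^m}$. Since $x < \frac{1}{4} = \frac{1}{2^2}$, we have $m \geq 2$. For every $k \leq m$ we have $x \leq \frac{1}{2^m} \leq \frac{1}{2^k}$, so $x$ lies in the central interval and the previous step gives $\frac{f_k(x)}{2^k} = x$. Summing the first $m$ terms therefore yields exactly $m x$.

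For the tail I would use the uniform bound $\md{f_k} \leq 1$, so that $\sum_{k=m+1}^\infty \frac{f_k(x)}{2^k} \geq -\sum_{k=m+1}^\infty \frac{1}{2^k} = -\frac{1}{2^m}$. Combining the two parts,
\[
	f(x) = m x + \sum_{k=m+1}^\infty \frac{f_k(x)}{2^k} \geq m x - \frac{1}{2^m}.
\]
It remains to check that $m x - \frac{1}{2^m} > 0$. Using $x > \frac{1}{2^{m+1}}$ I get $m x > \frac{m}{2^{m+1}} = \frac{m}{2} \cdot \frac{1}{2^m}$, and since $m \geq 2$ the right-hand side is at least $\frac{1}{2^m}$; hence $m x > \frac{1}{2^m}$ and $f(x) > 0$.

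I expect the only delicate point to be the bookkeeping around the choice of $m$: one must ensure that the strict inequality $x > \frac{1}{2^{m+1}}$ is genuinely available and that $m \geq 2$ is forced by $x < \frac{1}{4}$, since it is exactly the borderline case $m = 2$ that makes the final estimate tight. Everything else reduces to the observation that the central linear pieces accumulate a contribution of $m x$ while the remaining terms can subtract no more than a geometric tail of size $\frac{1}{2^m}$.
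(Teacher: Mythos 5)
Your proposal is correct and follows essentially the same route as the paper: both use Proposition \ref{middle_intervals} to get the exact formula $f_l(x)=2^l x$ on the central interval, sum the first $k$ (your $m$) terms, bound the tail by the geometric series $-\frac{1}{2^k}$, and conclude via $x>\frac{1}{2^{k+1}}$ and $k\geq 2$. The only difference is bookkeeping — you apply the lower bound on $x$ at the end to $mx$ rather than term by term — which changes nothing of substance.
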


\begin{proof}
	Найдём такое $k \in \N$, что $x \in \rsi{\frac{1}{2^{k+1}}, \frac{1}{2^k}}$. По предыдущему утверждению при всех $l$ $f_l$ линейно возрастает от $-1$ до $1$ на $\sbr{-\frac{1}{2^l}, \frac{1}{2^l}}$. Тогда при $l \leq k$ имеется оценка $f_l(x) = 2^l x > \frac{2^l}{2^{k+1}}$. Применим её к частичной сумме:
	\[
		\sum_{l=1}^k \frac{f_l(x)}{2^l} > \sum_{l=1}^k \frac{1}{2^{k+1}} = \frac{k}{2^{k+1}}.
	\]
	Теперь посмотрим на остаток ряда:
	\[
		\sum_{l=k+1}^\infty \frac{f_l(x)}{2^l} \geq \sum_{l=k+1}^\infty \frac{-1}{2^l} = -\frac{1}{2^k}.
	\]
	Итого
	\[
		f(x) = \sum_{l=1}^\infty \frac{f_l(x)}{2^l} > \frac{k}{2^{k+1}} -\frac{1}{2^k} = \frac{k - 2}{2^{k+1}}.
	\]
	Если же $x \in \ps{0, \frac{1}{4}}$, то $k \geq 2$ и $f(x) > 0$, что и требовалось.
	
\end{proof}

Из определения $f_k$ или утверждения \ref{middle_intervals} следует, что при всех $k$ $f_k(0) = 0$. Соответственно, $f(0) = 0$, и по утверждению \ref{local_min} 0 является точкой строгого минимума функции $f$ на $\lsi{0, \frac{1}{4}}$. На этом теорема доказана.


\begin{thebibliography}{9}
	\bibitem{Wiki} Wikipedia contributors. \emph{Inflection point}. Wikipedia, The Free Encyclopedia, \url{https://en.wikipedia.org/w/index.php?title=Inflection_point&oldid=1243282412} (accessed May 13, 2025)
	\bibitem{Bruckner} A. Bruckner. \emph{Differentiation of Real Functions}. CRM Monograph series, 2nd ed., 1994. 
	\bibitem{Maximoff1940}
	И.М. Максимов. \emph{О преобразовании некоторых функций в точные производные}. Известия физико-математического общества при Казанском
	университете, т. XII. Сер. 3 (1940).
	\bibitem{Maximoff1943} I. Maximoff,
	\emph{On continuous transformation of some functions into an ordinary derivative}.
	Ann. Scuola Norm. Sup. Pisa., 12 (1943), pp. 147--160.
	\bibitem{Weil76}
	Clifford E. Weil. \emph{On nowhere monotone functions}
	Proc. Amer. Math. Soc. 56 (1976), pp. 388--389.
	\bibitem{Kalyabin13}
	G. A. Kalyabin, \emph{New examples of Pompeiu functions}. Eurasian Math. J., 4:3 (2013), pp. 63--69.
	\bibitem{Stronska89} Ewa Stro\'nska. \emph{On the functions approximately- and quasi-continuous which are almost everywhere discontinuous.}
	Problemy Matematyczne, Zeszyt 10 (1989).

	\bibitem{Redkozubov} В.В. Редкозубов. \emph{Лекции по математическому анализу}.
	М.: МФТИ, 2024.

\end{thebibliography}
\end{document}